\newcommand{\cl}{\mathscr}
\newcommand{\op}{\operatorname}
\newcommand{\clo}{\overline}
\newcommand{\pic}{\op{Pic}}
\theoremstyle{plain}
\newtheorem{theorem}{\textbf{Theorem}}[section]
\newtheorem*{theorem*}{\textbf{Theorem}}
\newtheorem{proposition}[theorem]{\textbf{Proposition}}
\newtheorem{cor}[theorem]{\textbf{Corollary}}	
\newtheorem*{cor*}{\textbf{Corollary}}
\newtheorem*{conj*}{Conjecture}
\theoremstyle{definition}
\newtheorem{definition}[theorem]{\textbf{Definition}}
\newtheorem{remark}[theorem]{\textbf{Remark}}
\newtheorem{example}[theorem]{Example}
\newcommand{\thistheoremname}{}
\newtheorem*{generictheorem*}{\thistheoremname}
\newenvironment{namedtheorem*}[1]
{\renewcommand{\thistheoremname}{#1}%
	\begin{generictheorem*}}
	{\end{generictheorem*}}
\title{Moduli of rank 1 isocrystals}
\date{}
\author{Efstathia Katsigianni\footnote{Freie Universit\"at Berlin, Arnimallee 3, 14195, Berlin, Germany.  Supported by Berlin Mathematical School.  Email: \texttt{efstathia.19@gmail.com}.}} 
\begin{document}
\maketitle
\begin{abstract}
\noindent In this article we express the set of rank 1 isocrystals on a proper curve as a subset of the de Rham moduli space, defined by Simpson and Langer. Using results from the theory of Berkovich spaces, we compare the $\ell$-adic cohomology of this subspace with the $\ell$-adic cohomology of the whole moduli space. This confirms a conjecture of Deligne in the rank 1 case and explains one of his examples in this case from the point of view of isocrystals.
\end{abstract}  

\section{Introduction}
  It is a very natural question  whether  a moduli space which parametrizes isocrystals exists. Using the equivalence of the category of isocrystals on a projective variety with the category of modules with quasi-nilpotent integrable connection on a smooth lift of the variety over the Witt ring, and Grothendieck's formal function theorem, one can see that such a space can be constructed as a subspace of the de Rham moduli space of vector bundles with integrable connection. This was constructed by Simpson in \cite{Simpson1994a} and \cite{Simpson1994} in  characteristic zero and extended by Langer \cite{Langer2014} and others, see for example \cite{Groechenig2016},\cite{Laszlo2001}, \cite{Braverman2007}, in the positive characteristic case.
  
  This article deals with the moduli space of rank 1 isocrystals on a smooth proper curve and provides evidence for a conjecture of Deligne \cite{Deligne2015} in the rank 1 case. 
  Concretely, let $C_0$ be a smooth projective curve over the finite field $\mathbb F_q$ of characteristic $p>0$ and let $C$ be its base change to $\clo{\mathbb F}_q=:k$. %a finite field of characteristic $p>0$.
  Denote by $W$ the Witt ring of $\clo{\mathbb F}_q$, by $K$ its field of fractions, of characteristic 0, and $\clo K$ an algebraic closure of it. Denote by $C_W$ a smooth lift over $\op{Spec}W$ and assume in addition that $C_W$ admits a section $x:\op{Spec}W\to C_W$. 
  
  Deligne conjectured the following in \cite{Deligne2015}:
  let $M_K$ denote the moduli space of vector bundles of rank $r$ on a smooth curve $C$, as before, endowed with an integrable connection, and respectively $M_{\clo{K}}$ the corresponding scheme on an algebraic closure $\clo K$ of $K$. 
  \begin{conj*}[Conjecture 2.18 in \cite{Deligne2015}]
  	The cohomology of $M_{\clo K}$ admits an endomorphism $V^*$, such that for all $n\ge 1$, the number $N_n$ of fixed points of $V^*$ on $E_r$, the set of isomorphism classes of irreducible lisse $\ell'$-adic sheaves of rank $r$ on $C$, is given by
  	\[ N_n=\sum_i(-1)^i\op{Tr}(V^{*n}, H^i(M_{\clo K})),   \]
  	where all cohomology groups denote $\ell$-adic cohomology groups, with $\ell\neq p$. 
  \end{conj*}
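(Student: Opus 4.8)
The plan is to exhibit $V^*$ as the operator induced by the Frobenius structure of $F$-isocrystals on the $\ell$-adic cohomology of $M_{\clo K}$, to compute the arithmetic count $N_n$ via the Langlands correspondence and a trace formula, and to match the two through the crystalline companion correspondence together with the Berkovich comparison between the isocrystal locus and the full de Rham moduli space announced above.

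I would first make the fixed-point side precise. The set $E_r$ carries a natural action of $\mathrm{Gal}(k/\mathbb F_q)=\langle F\rangle$ by pullback along the $q$-power Frobenius of $C$, and an irreducible $\mathcal L$ is fixed by $F^n$ exactly when $F^{n*}\mathcal L\cong\mathcal L$, i.e. (after the determinant normalisation that makes $E_r$ of finite type and rigidifies the descent datum) when $\mathcal L$ descends to $C_0\otimes_{\mathbb F_q}\mathbb F_{q^n}$. Thus $N_n=\#E_r^{F^n}$ counts the irreducible rank $r$ sheaves defined over $\mathbb F_{q^n}$ up to Frobenius twist. On the de Rham side I would transport this Frobenius to an endomorphism $V$ of the isocrystal locus inside $M$: an $F$-isocrystal carries a Frobenius isomorphism $\phi^*\mathcal E\xrightarrow{\sim}\mathcal E$, and varying $\mathcal E$ this supplies a self-map whose pullback on cohomology is the desired $V^*$ on $H^*(M_{\clo K})$.

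Next I would compute $N_n$. By L. Lafforgue's proof of the Langlands correspondence for $\mathrm{GL}_r$ over the function field of $C_0$, the determinant-normalised $E_r$ is in bijection with a set of cuspidal automorphic representations, under which the $F$-action corresponds to base change along $\mathbb F_q\to\mathbb F_{q^n}$. Feeding this into the Grothendieck--Lefschetz trace formula --- in its geometric incarnation as the Frobenius trace on the $\ell$-adic cohomology of the moduli of shtukas --- would present $N_n$ as an alternating sum $\sum_i(-1)^i\op{Tr}(F^n,H^i)$ of Frobenius traces, and in particular as a $\mathbb Z$-linear combination of functions $n\mapsto\beta^n$ for Weil numbers $\beta$, as Deligne establishes in \cite{Deligne2015}.

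It remains to identify this ``counting motive'' with $M_{\clo K}$ compatibly with Frobenius. Here I would invoke the companion correspondence between irreducible $\ell'$-adic local systems and overconvergent $F$-isocrystals (Abe, Abe--Esnault), which matches $E_r$ with the isocrystal locus inside $M$ and carries the Frobenius $F$ to the endomorphism $V$ above, and then the Berkovich-space comparison to pass from the $\ell$-adic cohomology of that locus to that of the ambient space $M_{\clo K}$; matching Weil numbers on the two sides would conclude. The main obstacle is exactly this last identification in arbitrary rank, since neither the cohomology of the rank $r$ de Rham moduli space nor the fine structure of $N_n$ is known explicitly, and bridging the automorphic count with the geometry of $M_{\clo K}$ would require a cohomological refinement of Langlands that is not presently available. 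This is why the strategy becomes fully effective for $r=1$, where $E_1$ and $M$ are governed by class field theory and the Jacobian, the companion correspondence is elementary, and the trace formula degenerates to an explicit finite count --- the case this paper settles.
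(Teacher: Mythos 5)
The statement you set out to prove is presented in the paper as a \emph{conjecture} (Deligne's Conjecture 2.18), and the paper contains no proof of it. Its actual contribution is to verify, in rank $1$, the two expected properties surrounding the conjecture: first, the restriction isomorphism $\op{H}^*(\pic^{\nabla}(C_{\clo K})^{\op{an}},\mathbb Q_\ell)\simeq \op{H}^*(]\pic^{\nabla}(C)^{\psi=0}[_{\clo K},\mathbb Q_\ell)$ of Proposition \ref{PropCoh}, proved via Berkovich's vanishing-cycles comparison for tubes, the Cartier isomorphism identifying the zero fiber of the Hitchin map with $\pic^0(C^{(p)})$, smooth and proper base change, and homotopy invariance for the universal vector extension of Section \ref{SectionUnivExtension}; and second, the existence of a Frobenius on the tube via the moduli description of $]\pic^{\nabla}(C)^{\psi=0}[$ by isocrystals on formal models (Section \ref{SectionModFrob}). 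The fixed-point count itself is Deligne's own Proposition 2.20, resting on autoduality of the Jacobian, giving $N_1=|\pic^0(C_0)(\mathbb F_q)|$; no class field theory, Langlands correspondence, shtukas, or companions enter anywhere, and the general conjecture is attributed to Yu \cite{Yu2018}. So your proposal cannot be matched against a proof in the paper; it must be judged as a free-standing attempt on the conjecture.

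As such, it has a genuine gap, which you in fact concede in your last paragraph: the pivotal identification of the automorphic/companion count of $N_n$ with $\sum_i(-1)^i\op{Tr}(V^{*n},\op{H}^i(M_{\clo K}))$ is precisely the content of the conjecture and is nowhere supplied. Two steps fail concretely as written. First, the Frobenius structure on isocrystals only yields a self-map of the isocrystal locus $M^0_{\clo K}$ (here the tube), not of $M_{\clo K}$; transporting $V^*$ onto $\op{H}^*(M_{\clo K})$ requires exactly property (1), which is Proposition \ref{PropCoh} in rank $1$ and has no known analogue in higher rank, where the nilpotent $p$-curvature fiber of the Hitchin map is singular and the Berkovich comparison you invoke is unavailable. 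Second, even granting $V$, no Grothendieck--Lefschetz trace formula applies: $V$ is a non-algebraic, non-proper endomorphism of an open analytic subspace, and Deligne's remark that ordinary cohomology suffices because $V$ contracts $M^0_{\clo K}$ into a proper open subset is a heuristic, not an established fixed-point theorem in this setting; your trace formula on moduli of shtukas computes the automorphic count, and bridging it to $\op{H}^*(M_{\clo K})$ is the missing ``cohomological refinement'' you name. Finally, even in the rank $1$ case your route through class field theory and companions is heavier than what the paper does: there the count follows from $\pic^{\nabla}$ being the universal extension of $\pic^0$ together with $V^*$ being transpose to $F^*_{\pic^0(C)}$, as recalled in Example \ref{ExampleDeligne}.
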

  
  Deligne expects that there should exist an open subspace $M^0_{\clo K}$ inside the Berkovich analytification  $M^{\op{an}}_{\clo K}$, which should correspond to the sublocus of isocrystals and such that it has the following two properties:
  \begin{enumerate}[label={(\arabic*)}]
  	\item the restriction morphism $H^*(M_{\clo K}) = H^*(M^{\op{an}}_{\clo K})\to H^*(M^0_{\clo K})$ is an isomorphism and
  	\item a crystalline interpretation of $M^0$ allows us to define $V= \op{Frob}^*: M^0_{\clo K}\to M^0_{\clo K}$, which induces $V^*$ on cohomology.
  \end{enumerate}
  In the same article, Deligne provides an example of this in the rank 1 case, \cite[Example 2.19 and Proposition 2.20]{Deligne2015} using $\pic^0$ as $M_0$. Our goal  is to explain why this example indeed provides a comparison between the cohomology of the subset of isocrystals and that of the moduli space of rank 1 connections. 
  
  The recent work of Hongjie Yu \cite{Yu2018} actually proves Deligne's conjecture and gives explicit formulas for the number of irreducible $\ell$-adic local systems fixed by the Frobenius. In our work, we mainly focus on the first part of this conjecture, specifically on the relation with the theory of isocrystals and subsequently why there should be a Frobenius on the cohomology of the moduli space.
  
  \section*{Leitfaden}
We consider the rank 1 case of Deligne's conjecture and use the moduli space $\op{Pic}^\nabla(C_W)$ of line bundles of degree zero with integrable connection on $C_W$. This is the universal extension of $\op{Pic}^0(C_W)$, see Section \ref{SectionUnivExtension}, and is therefore fine.

Our first goal is to characterize the subspace of rank 1 isocrystals inside this moduli space. Isocrystals can indeed be thought as vector bundles with integrable and nilpotent connection: we say that a point $[(\cl{L},\nabla)]$ of $\op{Pic}^{\nabla}(C_W)$ represents an isocrystal on $C$ if the associated pair 
$(\cl{L}_{\clo{\mathbb F_q}},\nabla_{\clo{\mathbb F_q}})$ represents an isocrystal, or equivalently if
$(\cl{L}_{\clo{\mathbb F_q}},\nabla_{\clo{\mathbb F_q}})$ has nilpotent $p$-curvature. This condition is equivalent to requiring that the characteristic polynomial of the $p$-curvature is zero or that $\upchi([(\cl{L}_{\clo{\mathbb F_q}},\nabla_{\clo{\mathbb F_q}} )])=0$, with $\upchi: \op{Pic}^{\nabla}(C) \to \cl{A}^1(\clo{\mathbb F_q}):=H^0(C,\omega^p_C)$ the Hitchin map.  We denote the fiber over zero of the Hitchin map by $\pic^{\nabla}(C)^{\psi=0}$.
By the Cartier isomorphism, \cite[Theorem 5.1]{Katz1970}, we have that this fiber is isomorphic to $\pic^0(C^{(p)})$, which is actually isomorphic to $\pic^0(C)$.

In the spirit of the above conjecture we consider in Section \ref{SectionModIsoc} the Berkovich analytification of $\pic^{\nabla}(C_W)_K$ and remark that the inverse image of $\pic^{\nabla}(C)^{\psi=0}$ by the reduction map
\[\op{red}: \widehat{(\pic^{\nabla}(C_W))}_K\to \pic^{\nabla}(C)\] is an open subset, denoted by $]\pic^{\nabla}(C)^{\psi=0}[$.
Using results of \cite{Berkovich1990} and \cite{Berkovich1996}, we show in Proposition \ref{PropCoh} that
\[ H^*(\pic^{\nabla}(C_{\clo K})^{\op{an}},\mathbb Q_{\ell})\xrightarrow{\sim}
H^* (]\pic^{\nabla}(C)^{\psi=0}[_{\clo K},\mathbb Q_{\ell})\]
and see therefore that $]\pic^{\nabla}(C)^{\psi=0}[$ is a good candidate for being the open subset $M^0$ of the aforementioned conjecture. 

As a last step, we prove that this subset admits a Frobenius action. To this end, we describe $]\pic^{\nabla}(C)^{\psi=0}[$ as a subfunctor of $\pic^{\nabla}(C_K)^{\op{an}}$. For this, it is enough to characterize the set \[\op{Hom}(S,]\pic^{\nabla}(C)^{\psi=0}[ )\] for $S$ an affinoid. Indeed, using results of \cite{Bosch1993} we obtain in Section \ref{SectionModFrob} an isomorphism between \[\op{Hom}(S,]\pic^{\nabla}(C)^{\psi=0}[ )\] and the set 
\begin{multline}
\{ (\cl{L},\nabla)\text{ line bundles with integrable connection on }S\times_K C^{\op{an}}_K \text{ which are}\\\text{ isocrystals on }
\cl S'\times C_W\to \cl S' \text{ where } \cl S' \text{is a formal model of } S',\\\hfill\text{an admissible formal blow-up of }S  \}\hfill
\end{multline} and prove that this isomorphism is well defined (independent of the choice of an admissible blow-up) and functorial.
 \medskip 
 
 {\it Acknowledgements: }This work is part of my doctoral thesis at the Freie Universit\"at Berlin, which was supervised by  Prof. H\'el\`ene Esnault. The biggest part of this work was completed during my visit in Tokyo, where I was hosted by Prof. Tomoyuki Abe, at Kavli IPMU. I am grateful for his kindness and help. I would like to thank both for their guidance and support during the last three years.

\section*{Notation}
Let $C_0$ be a smooth projective curve over $\mathbb F_q$, where $q=p^n$ for some prime number $p$. We denote by $\clo {\mathbb F_q}$ an algebraic closure of $\mathbb F_q$, and denote by $C$ the curve defined by
\[\begin{tikzcd}
C\arrow{d}\arrow{r}&C_0\arrow{d}\\
\op{Spec}\clo{\mathbb F_q} \arrow{r}&\op{Spec}\mathbb F_q
\end{tikzcd}
\]
%a finite field of characteristic $p>0$.
Denote by $W$ the Witt ring of $\clo{\mathbb F_q}$ and by $K$ its field of fractions, of characteristic 0, $\clo K$ an algebraic closure of it and by $C_W$ a smooth lift over $\op{Spec}W$.
\begin{equation}
\begin{tikzcd}
C\arrow{r}\arrow{d}{proper,smooth}& C_W\arrow{d}&C_K\arrow{d}\arrow{l}\\
\operatorname{Spec}{\clo{\mathbb F_q}}\arrow[hook]{r}&\operatorname{Spec}W&\operatorname{Spec}K\arrow{l}.
\end{tikzcd}
\end{equation}
%We assume in addition that $C_W$ admits a section $x:\op{Spec}W\to C_W$. 
%C_W always has a section since it is proper over an alg. closed field.
\section{Background}
In this section we include some background on the theory of Berkovich and rigid analytifications and refer to results that we use later, in order to make this article as self contained as possible.
\subsection{Berkovich and rigid analytifications}
After the introduction of overconvergent isocrystals by Berthelot in \cite{BerPre}, rigid analytic geometry became a necessary tool for the study of isocrystals. The theory of Berkovich spaces is another approach to non-archimedean geometry and provides us with spaces that are very close to rigid analytic spaces, but have a true topology, in contrast to the rigid analytic ones, which have a Grothendieck topology. We focus mainly on the functors of analytification from algebraic schemes to the respective categories of rigid analytic and Berkovich spaces. For the basic definitions in the theory of rigid analytic geometry we refer to \cite[Chapter 1-5]{Bosch1993}. Nice introductions to the theory of Berkovich spaces can be found in \cite{Baker2008},\cite{Temkin2015},\cite{Ducros2015} and of course in the original papers of Berkovich \cite{Berkovich1990} and \cite{Berkovich1993}.

In this section we denote by $K$ a complete field equipped with a non-trivial non-archimedean absolute value, $R$ denotes the associated valuation ring, and $k$ the residue field of $R$.
\subsection{Rigid analytification}
Building blocks of rigid analytic spaces are affinoid spaces, \cite[Section 3.2]{Bosch2014}. They are defined as maximal spectra of affinoid $K$-algebras, which in turn are defined as quotients of Tate algebras associated to $K$, for details see \cite[]{Bosch2014}. 
On an affinoid $K$-space $X$, a Grothendieck topology $\mathscr{T}$ is defined  in the following way: the objects of $\op{Cat}\mathscr T$ are affinoids subdomains, morphisms are inclusions and coverings are the finite coverings by affinoid subdomains. 
From this local data, we can construct a Grothendieck topology $\mathscr T$ on a $K$-space $X$. Objects of $\op{Cat}\mathscr{T}$ are called \textit{admissible open subsets} of $X$ and coverings in $\mathscr T$ are called \textit{admissible coverings}.

The rigid analytification of a $K$-scheme is defined via a universal property:
\begin{definition}[Rigid analytification]\label{DefRigAn}
	The \textit{rigid analytification} of a $K$-scheme $(X,\mathcal O_X)$ locally of finite type is a rigid analytic space $(X^{\op{an}},\mathcal O_{X^{\op{an}}})$ together with a morphism of locally G-ringed $K$-spaces $(i,i^*):(X^{\op{an}},\mathcal O_{X^{\op{an}}})\to (X,\mathcal O_X)$ which has the following universal property: any morphism of locally G-ringed $K$-spaces $(Y,\mathcal O_Y)\to (X,\mathcal O_X)$, for $(Y,\mathcal O_Y)$ a rigid $K$-space, factors through $(i,i^*)$.
\end{definition}
If we have now a morphism of $K$-schemes $f:X\to S$, we obtain its analytification $f^{\op{an}}$ of $f$ by the universal property of analytification:
\begin{equation*}
\begin{tikzcd}
X^{\op{an}}\arrow{r}{f^{\op{an}}}\arrow{d}& Y^{\op{an}}\arrow{d}\\
X\arrow{r}{f}&Y\,\,\,.
\end{tikzcd}
\end{equation*}
Moreover, one can show as in \cite[Proposition 5.4.4]{Bosch2014}, that every $K$-scheme admits an analytification, which is constructed by gluing the analytifications of its affine parts, and that the map of sets $i:X^{\op{an}}\to X$ identifies the closed points of $X^{\op{an}}$ and $X$.

Given a coherent sheaf  $\cl{F}$ on an algebraic variety $X$, one can define a coherent sheaf of $\mathcal O_{X^{\op{an}}}$-modules $\cl{F}^{\op{an}}$, by $\cl{F}^{\op{an}}:=i^{-1}\cl{F}\otimes_{i^{-1}\mathcal O_X}\mathcal O_{X^{\op{an}}}$. Moreover, there is an analogue of Serre's GAGA theorem in the rigid analytic case:
\begin{theorem}[Rigid GAGA {\cite[Theorem 6.3.13]{Bosch2014}}]
	If $X$ is proper over $K$, then the functor  $\cl{F}\mapsto \cl{F}^{\op{an}}$ induces an equivalence of categories between the category of coherent $\mathcal O_X$-modules and the category of coherent $\mathcal O_{X^{\op{an}}}$-modules. Moreover, for every $n\ge 0$, there exist a canonical isomorphism
	\[\op{H}^n(X,\cl{F})\simeq \op{H}^n(X^{\op{an}},\cl{F}^{\op{an}}). \]
\end{theorem}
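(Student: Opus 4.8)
The plan is to follow the classical template of Serre's GAGA, adapted to the rigid-analytic setting, and to split the statement into three parts: the cohomology comparison, full faithfulness of the functor $\cl F\mapsto\cl F^{\op{an}}$, and its essential surjectivity. First I would reduce to the case $X=\mathbb P^n_K$. Any projective $X$ admits a closed immersion $i\colon X\hookrightarrow\mathbb P^n_K$, and the functor $i_*$ commutes with analytification while preserving coherence on the analytic side by Kiehl's proper mapping theorem (the rigid analogue of Grauert's coherence theorem); hence all three assertions for $X$ follow from the corresponding assertions on $\mathbb P^n_K$. For a general proper $X$ one reduces to the projective case either by Chow's lemma together with cohomological descent, or by choosing a proper formal $R$-model and invoking formal GAGA.

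For the cohomology comparison on $\mathbb P^n_K$ I would first treat the line bundles $\mathcal O(d)$ by direct computation: on both the algebraic and the analytic side $H^0$ is the space of degree-$d$ homogeneous forms, $H^n$ is dual to $H^0(\mathcal O(-d-n-1))$, and the intermediate groups vanish, so the natural map $H^\bullet(\mathbb P^n,\mathcal O(d))\to H^\bullet(\mathbb P^{n,\op{an}},\mathcal O(d)^{\op{an}})$ is an isomorphism; the analytic side of this computation rests on Tate's acyclicity theorem for the standard affinoid cover. I would then pass to an arbitrary coherent $\cl F$ by d\'evissage: present $\cl F$ as a quotient of a finite direct sum of twists $\mathcal O(-d_i)$, and run a descending induction on the cohomological degree together with the five lemma applied to the long exact sequences on both sides.

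Full faithfulness then follows formally. For coherent sheaves $\cl F,\cl G$ the internal Hom $\mathcal H om(\cl F,\cl G)$ is again coherent, one has $\op{Hom}(\cl F,\cl G)=H^0(X,\mathcal H om(\cl F,\cl G))$, and analytification commutes with internal Hom; the degree-zero comparison isomorphism established above therefore identifies the two Hom groups.

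The main obstacle is essential surjectivity, i.e.\ showing that every coherent analytic sheaf $\cl G$ on $X^{\op{an}}$ is of the form $\cl F^{\op{an}}$ for some algebraic $\cl F$. The crucial input is the analytic analogue of Serre's Theorems A and B on $\mathbb P^{n,\op{an}}$: for $d\gg 0$ the twist $\cl G(d)$ is generated by finitely many global sections and has vanishing higher cohomology. This is precisely where Kiehl's finiteness theorem is indispensable, since it guarantees that the relevant analytic cohomology groups are finite-dimensional and that the twisting argument terminates. Granting this, one writes $\cl G$ as the cokernel of a morphism $\mathcal O(-e_j)^{\op{an}}\to\mathcal O(-d_i)^{\op{an}}$; by the full faithfulness already established this morphism is the analytification of an algebraic morphism, whose algebraic cokernel $\cl F$ satisfies $\cl F^{\op{an}}\cong\cl G$. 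I expect the verification of the analytic Serre vanishing, and hence the reliance on Kiehl's coherence theorem for proper maps, to be the technical heart of the whole argument.
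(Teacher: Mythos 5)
The paper does not actually prove this theorem: it is imported as background, with the proof delegated to the cited reference \cite[Theorem 6.3.13]{Bosch2014} (rigid GAGA, going back to K\"opf's adaptation of Serre's argument). So there is no internal proof to compare with; measured against the proof in the literature that the citation points to, your outline is essentially that standard argument: reduction to $\mathbb{P}^n_K$ via a closed immersion, direct \v{C}ech computation of $\op{H}^\bullet(\mathbb{P}^{n,\op{an}},\mathcal O(d)^{\op{an}})$ on the standard affinoid cover using Tate acyclicity, descending d\'evissage with the five lemma, full faithfulness through the degree-zero comparison for the coherent internal Hom (using that analytification is exact, since $\mathcal O_{X^{\op{an}},x}$ is flat over $\mathcal O_{X,i(x)}$ --- you also need this flatness tacitly in your cokernel step), and essential surjectivity via analytic Theorems A and B, whose engine is indeed Kiehl's finiteness theorem. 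That identification of Kiehl's theorem as the technical heart is accurate.

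One step in your sketch is genuinely thinner than it should be: the passage from projective to proper $X$. In rigid geometry, even the statement that $X^{\op{an}}$ is a \emph{proper rigid space} when $X$ is a proper $K$-scheme is a nontrivial theorem (K\"opf; see also later proofs by L\"utkebohmert and Temkin), because Kiehl's notion of properness --- relative compactness of one affinoid cover inside another --- is not a topological condition one can read off as in the complex-analytic case; without it you cannot invoke Kiehl's finiteness or proper mapping theorems on the analytic side at all. Likewise, the Chow's lemma d\'evissage requires the relative comparison $(f_*\cl{F})^{\op{an}}\simeq f^{\op{an}}_*\cl{F}^{\op{an}}$ for a proper map $f$, which must be established as part of the noetherian induction (as in the complex case in SGA1, Exp.\ XII), not merely assumed; ``cohomological descent'' is not the mechanism used there. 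Your alternative route via a proper flat formal $R$-model and formal GAGA can be made to work, but it needs Raynaud--Gruson flattening to produce the model and a comparison of coherent sheaves on the rigid generic fiber with those on the formal model, so it is not lighter. Finally, a small overstatement: Kiehl's proper mapping theorem is not needed for the closed immersion $i$ --- coherence of $i_*$ is elementary --- though citing it is harmless.
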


We will be particularly interested in the rigid analytification of $\pic^0$:
\begin{remark}[{\cite[Lemma 4.3.2]{Conrad2006}}]\label{AnalytPic}
	Given a map between rigid spaces $X\to S$, where $X$ has geometrically reduced and connected fibers over $S$, together with a section $x\in X(S)$, we can consider the functor $\textbf{Pic}_{X/S,x}$, which associates to any rigid analytic spaces $S'$ over $S$, the set of $x_{S'}$-rigidified line bundles on $X\times_S S'$.
	
	When the rigid spaces are rigid analytifications of algebraic varieties, this functor is actually representable: for a map $X\to S$ of algebraic $K$ schemes with geometrically reduced and connected fibers, it is proven in \cite[Lemma 4.3.2]{Conrad2006} that the analytification of the Picard scheme $\op{Pic}_{X/S,x}$ represents the functor
	$\textbf{Pic}_{X^{\op{an}}/S^{\op{an}}, x^{\op{an}}}$. 
	%\marginnote{more details?}
\end{remark}	

\subsection{Berkovich analytification}
Rigid analytic geometry makes it possible to define the notion of an analytic function on a non-archimedean field $K$, but doesn't provide a good topological space to work with; one works with a Grothendieck topology instead. Berkovich's $K$-analytic spaces though have a true topology, which makes working with them easier.

Building blocks of a $K$-analytic space are the so called $K$-affinoid spaces: spaces of the form $\cl{M}(A)=$ the space of multiplicative semi-norms on the Banach algebra $A$, see \cite[Definition 4.1.2.1]{Ducros2015}. A $K$-analytic space is then a topological space which admits a $K$-affinoid atlas on $X$, together with a family of subsets, called a net, see \cite[Definition 4.1.1.1]{Ducros2015}.

As in the case of rigid analytic spaces, we are mainly interested in the functor which assigns a $K$-analytic space to an algebraic $K$-variety. This functor is called the \textit{Berkovich analytification functor}.
This functor is defined as follows on affine schemes: let $X=\op{Spec}K[T_1,\cdots,T_n]$. We set $X^{\op{an}}:=\cl{M}(K[T_1,\cdots,T_n])$, which is exactly the affine space of the previous example. For a scheme $\op{Spec}K[T_1,\cdots,T_n]/I$, we define the analytification as the closed subset of $X^{\op{an}}$ defined by the vanishing of $I\mathcal O_{X^{\op{an}}}$.

The analytification functor can also be defined via a universal property: the analytification of an algebraic variety $X$ represents the functor which assigns to a $K$-analytic space $Y$, the set of morphisms of locally ringed spaces $Y\to X$. This also means, exactly like the rigid analytic case, that there is a morphism $i:X^{\op{an}}\to X$. For a coherent $\mathcal O_X$-module $\cl F$, we can define a coherent $\mathcal O_{X^{\op{an}}}$-module $\cl{F}^{\op{an}}=i^*(\cl F)$.

\begin{remark}\label{RemBerkRig}
	There is a close relation between $K$-analytic spaces and rigid analytic $K$-spaces, which is explained in \cite[Section 3.3]{Berkovich1990}.
	To state it, we first need to recall the following: for $\cl{A}$ a commutative Banach algebra over $K$, a point $x\in \cl{M}(\cl{A})$ gives rise to a character $\cl{A}\to \cl{H}(x)$, where $\cl{H}(x)$ is a field. Indeed, the multiplicative semi-norm corresponding to $x$ extends to the fraction field of the quotient ring of $\cl{A}$ by its kernel and $\cl{H}(x)$ is then defined as the completion of this field, see \cite[p.4]{Berkovich1990}.
	
	By \cite[Proposition 2.2.5]{Berkovich1990}, the class of strictly affinoid domains in $X$ defines a $G$-topology on a separated strictly $K$-analytic space $X$ and setting $X_0=\{x\in X|[\cl{H}(x):K]<\infty \}$, this has the induced $G$-topology and has the structure of a rigid analytic $K$-space. Moreover, \cite[Proposition 2.3.1]{Berkovich1990} implies that given a sheaf $\cl{F}$ on $X$, we get an induced sheaf $\cl{F}_0$ on $X_0$ and the correspondence $\cl{F}\mapsto \cl{F}_0$ is an equivalence between the categories of coherent sheaves on $X$ and $X_0$.

	As explained also in \cite[p.58]{Baker2008}, by the correspondence $X\mapsto X_0$ we also obtain an equivalence of categories between the category of quasi-separated rigid spaces admitting a locally finite admissible covering by affinoid opens and the category of paracompact Hausdorff strictly $K$-analytic spaces. Moreover, the Berkovich analytification functor from algebraic $K$-schemes to strictly analytic $K$-spaces and the rigid analytification functor from algebraic $K$-schemes to rigid analytic spaces over $K$ is compatible with this equivalence. This functor in fact preserves the category of locally constant sheaves and their cohomology groups, by \cite[Proposition 3.3.4]{Berkovich1990}. Therefore, cohomological results from the theory of rigid analytic spaces are applicable to Berkovich's $K$-analytic spaces.
\end{remark}

\subsection{Generic fibers of formal schemes}
It is possible to relate both the Berkovich and the rigid analytification of an algebraic variety with the theory of formal schemes; more specifically their generic fibers.  
\subsection*{Berkovich spaces and generic fibers of formal schemes}
%\begin{definition}[Special formal schemes]
%	Let $R$ be a topological adic Noetherian ring whose Jacobson radical is an ideal of definition. An $\mathfrak{a}$-adic $R$-algebra $A$ is called \textit{special} if $A/\mathfrak{a}^2$ is finitely generated over $R$, see \cite[Lemma 1.2]{Berkovich1996}. A formal scheme $\cl{X}$ over $R$ is called \textit{special} if it is locally a finite union of affine formal schemes of the form $\op{Spf}A$, with $A$ an adic special algebra over $R$.
%\end{definition}
Let $K$ be a non-archimedean field with a non-trivial discrete valuation, $R$ the ring of integers of $K$ and $k$ its residue field. For a special $R$-formal scheme $\cl{X}$, the ringed space $(\cl{X},\mathcal O_{\cl{X}}/\cl{I})$, where $\cl{I}$ is an ideal of definition of $\cl{X}$ that contains the maximal ideal $m_R$ of $R$, is a scheme locally of finite type over $k$ and is called the \textit{closed fiber} of $\cl{X}$, denoted by $\cl{X}_s$. It depends on the choice of $\cl{I}$ but the underlying reduced scheme and the associated topos do not, see \cite[p.370]{Berkovich1996}. 

\begin{remark}[{\cite[p.370]{Berkovich1996}}]
	If $\cl{Y}\subset \cl{X}_s$, the formal completion of $\cl{X}$ along $\cl{Y}$ is a special formal scheme over $R$. We denote this completion by $\cl{X}_{|\cl{Y}}$.
\end{remark}
Berkovich defines in \cite{Berkovich1996} a functor from the category of special formal $R$-schemes to the category of $K$-analytic spaces, which associates to $\cl{X}$ its generic fiber $\cl{X}_\eta$.
For a formal scheme of the form $\cl{X}=\op{Spf}A$, with $A$ a special $R$-algebra, the generic fiber $\cl{X}_\eta$ is defined as the set of continuous multiplicative semi-norms on $A$ that extend the valuation on $R$ and have value at most 1. For an arbitrary formal scheme $\cl{X}$, one can take an affine covering ${\cl{X}_i}$ and glue $\cl{X}_{i,\eta}$. For all details we refer to \cite[p.370-371]{Berkovich1996}.

\begin{example}
	In the case when $\cl{X}=\op{Spf}A$, with $A=K\{T_1,\cdots,T_n  \}[[S_1,\cdots,S_m]]$, where $K\{T_1,\cdots,T_n  \}$ is the algebra of restricted power series with coefficients in $K$. Then one defines $\cl{X}_\eta=E^m(0,1)\times D^n(0,1)$, where 
	$E^m(0,1)$ and  $D^n(0,1)$ are the closed and open polydiscs of radius 1 and center at zero, in $\mathbb A^m$ and $\mathbb A^n$ respectively.
\end{example}	
%reduction map
Moreover, one can construct a map, called the \textit{reduction map} $\pi:\cl{X}_{\eta}\to \cl{X}_s$.
\begin{proposition}[Definition of reduction map]\label{DefReduc}
	There is an anticontinuous map $\pi:\cl{X}_{\eta}\to \cl{X}_s$, called the reduction map.
\end{proposition}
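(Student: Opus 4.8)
The plan is to construct $\pi$ first on an affine piece $\cl{X}=\op{Spf}A$ and then to glue. Recall that a point $x\in\cl{X}_\eta$ is by definition a continuous multiplicative semi-norm $|\cdot|_x$ of value at most $1$ on the special $R$-algebra $A$ extending the valuation of $R$; as recalled in Remark \ref{RemBerkRig}, it gives rise to a character $\chi_x\colon A\to\cl{H}(x)$ into a complete valued field. Since $|\cdot|_x$ takes values at most $1$ on all of $A$, the image of $\chi_x$ lies in the valuation ring $\cl{H}(x)^\circ=\{\,a\in\cl{H}(x):|a|\le 1\,\}$. I would then compose with the reduction $\cl{H}(x)^\circ\twoheadrightarrow\widetilde{\cl{H}(x)}=\cl{H}(x)^\circ/\cl{H}(x)^{\circ\circ}$ onto the residue field and set $\pi(x)$ to be the prime ideal obtained as the kernel of the resulting map $A\to\widetilde{\cl{H}(x)}$.

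The first thing to verify is that this kernel defines a point of the closed fiber $\cl{X}_s=\op{Spec}(A/\cl{I})$ rather than merely of $\op{Spec}A$. For this I would observe that any element of an ideal of definition $\cl{I}$ is topologically nilpotent (its powers tend to $0$, and $\cl{I}\supseteq m_R$), so that $\chi_x(\cl{I})$ consists of elements of norm strictly less than $1$, i.e.\ $\chi_x(\cl{I})\subseteq\cl{H}(x)^{\circ\circ}$. Hence $\chi_x$ descends to a homomorphism $A/\cl{I}\to\widetilde{\cl{H}(x)}$, and its kernel is a point $\pi(x)\in\cl{X}_s$. Because the construction passes through the residue field $\widetilde{\cl{H}(x)}$, changing the ideal of definition $\cl{I}$ only replaces $\cl{X}_s$ by another scheme with the same underlying reduced space, so $\pi(x)$ is well defined; the affine-local constructions are compatible with localization and therefore glue to a map $\pi\colon\cl{X}_\eta\to\cl{X}_s$ for arbitrary $\cl{X}$.

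The heart of the statement is anticontinuity, i.e.\ that the preimage of a closed subset of $\cl{X}_s$ is open (equivalently, the preimage of an open subset is closed). It suffices to treat the affine case and a basic open $D(\bar f)=\{\,\bar f\neq 0\,\}\subset\cl{X}_s$, where $\bar f$ is the image in $A/\cl{I}$ of some $f\in A$ with $|f|_x\le 1$. By construction $\bar f$ does not vanish at $\pi(x)$ exactly when $|f(x)|=1$, so that $\pi^{-1}(D(\bar f))=\{\,x\in\cl{X}_\eta:|f(x)|=1\,\}=\{\,x:|f(x)|\ge 1\,\}$; since the evaluation $x\mapsto|f(x)|$ is continuous on the $K$-analytic space $\cl{X}_\eta$, this set is closed. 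A general open subset of $\cl{X}_s$ is the complement of a closed set $V(\mathfrak a)$, and here I would use that $A/\cl{I}$ is Noetherian (as $A$ is a special $R$-algebra): $\mathfrak a$ is generated by finitely many $\bar f_1,\dots,\bar f_n$, so the open subset is the finite union $\bigcup_{i}D(\bar f_i)$ and its preimage $\bigcup_{i}\{\,|f_i|=1\,\}$ is a finite union of closed sets, hence closed.

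The main obstacle I anticipate is precisely this last finiteness point: without the Noetherian hypothesis the preimage of an open set would only be an arbitrary union of the closed sets $\{\,|f|=1\,\}$ and would fail to be closed, so the argument genuinely relies on $A$ being a (Noetherian) special $R$-algebra and on the boundedness $|f|_x\le 1$ that forces $\{\,\bar f\neq 0\,\}$ to pull back to the \emph{closed} condition $|f(x)|=1$ rather than to an open one. The remaining checks---continuity of evaluation, compatibility of the character construction with restriction to affine subschemes, and independence of the ideal of definition---are routine.
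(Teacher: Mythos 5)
Your construction is correct and is essentially the very argument the paper itself does not spell out but cites (Berkovich, \emph{Vanishing cycles for formal schemes}, p.~541): one reduces the character $\chi_x\colon A\to\cl{H}(x)^\circ$ modulo the maximal ideal, notes that an ideal of definition consists of topologically nilpotent elements (so, by continuity and multiplicativity of the seminorm, $|f(x)|<1$ for $f\in\cl{I}$ and the kernel genuinely defines a point of $\cl{X}_s$, independently of the choice of $\cl{I}$ on the reduced space), and obtains anticontinuity from the identity $\pi^{-1}(D(\bar f))=\{x\in\cl{X}_\eta:|f(x)|=1\}$, a closed set since $|f(x)|\le 1$ everywhere and evaluation is continuous. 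You also correctly isolate the one nontrivial finiteness input, namely that a special $R$-algebra is Noetherian so every open of $\cl{X}_s$ is a \emph{finite} union of basic opens, which is exactly what makes the preimage of a general open closed; nothing is missing.
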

The proof of this fact can be found in \cite[p. 541]{Berkovich1996a}.

One can describe the inverse image of a subscheme $\cl{Y}$ of $\cl{X}_s$ in more detail:
\begin{proposition}[{\cite[Proposition 1.3]{Berkovich1996}}]\label{PropInverseImageBerk}
	With notation as in the above proposition, there is a canonical isomorphism $\pi^{-1}(\cl{Y})\simeq (\cl{X}_{|{\cl{Y}}})_\eta$.
\end{proposition}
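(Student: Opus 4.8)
The plan is to reduce to the affine case and then to match both sides as subsets of $\cl{X}_\eta$, first bijectively and then as $K$-analytic spaces. Since the generic-fibre and closed-fibre functors, the reduction map, and the operation of formal completion are all local on $\cl{X}$ and compatible with restriction to an open affine covering, I would first reduce to the case $\cl{X}=\op{Spf}A$ with $A$ a special $R$-algebra and $\cl{I}$ a finitely generated ideal of definition containing $m_R$, so that $\cl{X}_s=\op{Spec}(A/\cl{I})$. I treat the case where $\cl{Y}$ is closed, the general (locally closed) case reducing to it by restricting to an open formal subscheme of $\cl{X}$ on which $\cl{Y}$ becomes closed. Write $\cl{Y}=V(\clo J)$ for an ideal $\clo J\subseteq A/\cl{I}$, and lift it to a finitely generated ideal $J=(g_1,\dots,g_s)\subseteq A$ with $J\supseteq\cl{I}$. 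With this notation $\cl{X}_{|\cl{Y}}=\op{Spf}\widehat{A}_J$, where $\widehat{A}_J$ denotes the $J$-adic completion of $A$: this is a special $R$-algebra with ideal of definition $J\widehat{A}_J$ and closed fibre $\op{Spec}(A/J)=\cl{Y}$.

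The second step is to describe $\pi^{-1}(\cl{Y})$ explicitly. A point $x\in\cl{X}_\eta$ is a continuous multiplicative semi-norm $|\cdot|_x$ on $A$, bounded by $1$ and extending the valuation of $R$, and by the construction of the reduction map its image $\pi(x)$ is the point of $\op{Spec}(A/\cl{I})$ cut out by the prime $\{a\in A:\ |a|_x<1\}$, which indeed contains $\cl{I}$. Hence $\pi(x)\in\cl{Y}=V(\clo J)$ if and only if $|g_i|_x<1$ for $i=1,\dots,s$, so that $\pi^{-1}(\cl{Y})=\{x\in\cl{X}_\eta:\ |g_i|_x<1,\ i=1,\dots,s\}$, an open subset of $\cl{X}_\eta$. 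The canonical completion morphism $A\to\widehat{A}_J$ induces on generic fibres a morphism $c_\eta:(\cl{X}_{|\cl{Y}})_\eta\to\cl{X}_\eta$, sending a semi-norm on $\widehat{A}_J$ to its restriction along $A\to\widehat{A}_J$; this is manifestly canonical, and I would check that it fits into a commutative square with the two reduction maps and the closed immersion $\cl{Y}\hookrightarrow\cl{X}_s$, which already forces the image of $c_\eta$ to lie in $\pi^{-1}(\cl{Y})$.

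The heart of the argument is that $c_\eta$ is a bijection onto $\pi^{-1}(\cl{Y})$. In one direction, a semi-norm on $\widehat{A}_J$ satisfies $|g|_x<1$ for every $g$ in the ideal of definition $J\widehat{A}_J$, so its restriction to $A$ lands in $\pi^{-1}(\cl{Y})$. Conversely, given $x\in\pi^{-1}(\cl{Y})$, set $\rho=\max_i|g_i|_x<1$; then every element of $J^n$ has $x$-value at most $\rho^n$, so $A\to\cl{H}(x)$ is continuous for the $J$-adic topology, and since $\cl{H}(x)$ is complete the semi-norm extends uniquely and continuously to $\widehat{A}_J$, giving the inverse on points. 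I would then upgrade this set-theoretic bijection to an isomorphism of $K$-analytic spaces by exhibiting both $\pi^{-1}(\cl{Y})$ and $(\cl{X}_{|\cl{Y}})_\eta$ as the increasing union of the strictly affinoid domains $\{x:\ |g_i|_x\le|\varpi|^{1/j},\ i=1,\dots,s\}$ for a uniformizer $\varpi$ and $j\ge 1$, and identifying the corresponding affinoid algebras on the two sides; by Berkovich's description of the generic fibre of a special formal scheme these algebras coincide, so the affinoid atlases and structure sheaves match.

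The main obstacle is precisely this last upgrade: verifying that the bijection respects the analytic structure, i.e. that the affinoid algebras cutting out the exhausting domains agree on both sides, rather than merely the underlying sets. Once the affine case is settled — and it is automatically canonical, since the isomorphism is the corestriction of the intrinsic morphism $c_\eta$ and does not depend on the chosen generators $g_i$ or on the lift $J$ of $\clo J$ — the identification glues over an affine covering of $\cl{X}$ to produce the canonical global isomorphism $\pi^{-1}(\cl{Y})\simeq(\cl{X}_{|\cl{Y}})_\eta$.
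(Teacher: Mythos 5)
Your proof is correct and follows essentially the same route as the source: the paper itself gives no proof of this statement but imports it as \cite[Proposition 1.3]{Berkovich1996}, and Berkovich's argument there is exactly yours — reduce to the affine case $\cl{X}=\op{Spf}A$ with $\cl{Y}$ closed, identify both sides set-theoretically with $\{x\in\cl{X}_\eta : |g_i(x)|<1\}$ using continuity of the semi-norms and completeness of $\cl{H}(x)$, and match the analytic structures through the common exhaustion by strictly affinoid domains $\{x : |g_i(x)|\le|\varpi|^{1/j}\}$ (your choice of lift $J\supseteq\cl{I}$ is what makes these genuinely affinoid inside the non-affinoid $\cl{X}_\eta$). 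The only point worth writing out in full is the one you flag as the main obstacle, namely that the $J$-adic completion $A\to\widehat{A}_J$ induces isomorphisms $A\{T_i\}/(g_i^j-\varpi T_i)\simeq \widehat{A}_J\{T_i\}/(g_i^j-\varpi T_i)$, which holds because $g_i^j\in\varpi$ times the quotient forces the $J$-adic and $\varpi$-adic topologies to coincide on these complete quotients.
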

\begin{remark}[{\cite[p.553]{Berkovich1996a}}]\label{RemBerkGen}
	Given now a scheme of finite type over $R$, one can relate to it a $K$-analytic space in two different ways: by associating to it the analytification of its generic fiber $(X_\eta)^{\op{an}}$ and the generic fiber of its completion $(\widehat{X})_\eta$, the construction of which we summarized in this section. By construction, there is a morphism $(\widehat{X})_\eta \to (X_\eta)^{\op{an}}$ and for $X$ separated and finitely presented, this identifies $(\widehat{X})_\eta$ with a closed analytic subvariety of $(X_\eta)^{\op{an}}$; in the affine case $X=\op{Spec}A$ where $A$ is generated by $f_1,\cdots,f_n$, $(\widehat{X})_\eta=\{ x\in (X_\eta)^{\op{an}}| |f_i(x)|\le 1 \}$. For arbitrary varieties, one takes a finite covering by open affine subschemes and concludes in a similar way. 
	If $X$ is proper, this morphism is in fact an isomorphism. This follows from the more general fact, that a proper morphism $\phi:Z\to X$ induces an isomorphism $\widehat{Z}_\eta \overset{\simeq}{\longrightarrow} Z^{\op{an}}_\eta \times_{X^{\op{an}}_\eta} \widehat{X}_\eta$, applied to $X$ being a point.
\end{remark}

\subsection*{Formal models for rigid spaces}
Just as for Berkovich spaces, we can relate rigid spaces to generic fibers of formal schemes. 
%\begin{definition}
%	With notation as in the rest of this section, an $R$-algebra $A$ is called:  
%	\begin{enumerate}[label={(\arabic*)}]
%		\item \textit{topologically of finite type} if $A=R\langle\xi\rangle/\mathfrak{a}$, where $R\langle\xi\rangle=R\langle\xi_1,\cdots,\xi_n\rangle$ is the $R$-algebra of strictly convergent power series in variables $\xi_1,\cdots,\xi_n$ and $\mathfrak{a}\subset R\langle\xi\rangle$ an ideal,
%		\item  \textit{topologically of finite presentation} if $a$ is moreover finitely generated,
%		\item \textit{admissible} if $A$ has no $m_R$-torsion, where $m_R$ is as before the maximal ideal of $R$.
%	\end{enumerate}
%\end{definition}
%An affine formal scheme $\op{Spf}A$ is called admissible, if $A$ is admissible and a general formal scheme $X$ over $R$ is called admissible if it is locally of the form $\op{Spf}A$, with $A$ an admissible $R$-algebra.
%
%We can make the analogous definition for a formal scheme $\cl{X}$ over an $R$-formal scheme $\cl{S}$. If we denote by $u$ a generator of the maximal ideal $m_R$, we assume that for $\cl{I}=u \mathcal O_S$, the ideal $\cl{I}\mathcal O_X$ is an ideal of definition of $X$. We call $X$ \textit{admissible}, if moreover $\mathcal O_X$ has no $\cl{I}$-torsion.
In order to do this, we need the notion of an admissible formal blowing up:
\begin{definition}
	Let $\cl{X}/\cl{S}$ an admissible formal $\cl{S}$-scheme with ideal of definition $\cl{I}\mathcal O_{\cl{X}}$ and $\cl{A}\subset \mathcal O_{\cl{X}}$ an open ideal. Then we define
	\[ \cl{X}':=  \varinjlim_m \op{Proj}\oplus_{n=0}^{\infty}(\cl{A}^n\otimes_{\mathcal O_{\cl{X}}} \mathcal{O}_{\cl{X}}/\cl{I}^{m+1})   \]
	and call the morphism of $\cl{S}$-formal schemes $\phi:\cl{X}'\to \cl{X}$, an \textit{admissible formal blowing up} with respect to $\cl{A}$.
\end{definition}	
Among many interesting properties, see for example \cite[Section 2]{Bosch1993}, an admissible formal blowing up satisfies a universal property,\cite[Proposition 2.1(c)]{Bosch1993}: if $\psi:\cl{Z}\to \cl{X}$ is a morphism of formal $\cl{S}$-schemes such that $\cl{A}\mathcal O_{\cl{X}}$ is invertible on $\cl{Z}$, there exists a unique $\cl{S}$-morphism $\psi':\cl{Z}\to \cl{X}'$ such that $\psi=\phi\circ \psi'$. Note that \cite[Lemma 2.2]{Bosch1993} gives an explicit description of an admissible blowing up in the case of an affine formal scheme.

The relation between formal schemes and rigid analytic spaces can be seen in the next theorem of Raynaud:
\begin{theorem}[{\cite[Theorem 4.1]{Bosch1993}}, \cite{Raynaud1974}]
	There is an equivalence of categories between:
	\begin{enumerate}[label={(\arabic*)}]
		\item the category of quasi-compact admissible formal schemes, localized by admissible formal blowing ups, and
		\item the category of rigid $K$-spaces, which are quasi-compact and quasi-separated.
	\end{enumerate}
\end{theorem}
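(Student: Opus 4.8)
The plan is to exhibit the generic-fibre functor explicitly, check that it inverts admissible blowing ups, and then establish the two halves of an equivalence—essential surjectivity and full faithfulness—with the flattening technique of Raynaud--Gruson doing the decisive work. First I would define the functor $\op{rig}$ from the left-hand category to the right-hand one. On an affine admissible formal scheme $\cl X=\op{Spf}A$, with $A$ an $R$-flat algebra topologically of finite type, set $\cl X_{\op{rig}}:=\op{Sp}(A\otimes_R K)$; since $A\otimes_R K$ is an affinoid $K$-algebra this is a quasi-compact affinoid, and the assignment glues along a finite affine covering to produce a quasi-compact quasi-separated rigid space for any quasi-compact admissible $\cl X$. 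The first verification is that $\op{rig}$ sends admissible formal blowing ups to isomorphisms: if $\phi:\cl X'\to\cl X$ blows up an open ideal $\cl A\subset\mathcal O_{\cl X}$, then $\cl A$ contains an ideal of definition and therefore becomes the unit ideal after applying $-\otimes_R K$, so $\phi$ is an isomorphism on generic fibres. By the universal property of localisation, $\op{rig}$ descends to a functor on the category localised at admissible blowing ups, and this is the candidate equivalence.

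For \emph{essential surjectivity} I would first treat affinoids. Given an affinoid $K$-algebra $A$, choose an epimorphism $K\langle T_1,\dots,T_n\rangle\twoheadrightarrow A$ and let $A_0$ be the closure of the image of $R\langle T_1,\dots,T_n\rangle$; then $A_0$ is $R$-flat, topologically of finite type, and satisfies $A_0\otimes_R K\cong A$, so $\op{Spf}A_0$ is a formal model of $\op{Sp}A$. For a general quasi-compact quasi-separated rigid space $X$, I would choose a finite covering by affinoid subdomains, build a formal model of each piece, and glue. The overlaps a priori carry two different formal models of the same affinoid, so the gluing is not automatic; it is carried out by replacing the local models with suitable admissible blow-ups along which the transition maps become genuine morphisms of formal schemes. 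This realises $X$ as $\cl X_{\op{rig}}$ for some quasi-compact admissible $\cl X$.

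\emph{Full faithfulness} is where the real content lies. Faithfulness is the easy half: since an admissible formal scheme is $R$-flat, the map $\mathcal O_{\cl X}\hookrightarrow\mathcal O_{\cl X}\otimes_R K$ is injective, so a morphism of formal schemes is determined by its generic fibre, and two morphisms agreeing after $\op{rig}$ already agree. For fullness, given formal models $\cl X,\cl Y$ and a rigid morphism $f:\cl X_{\op{rig}}\to\cl Y_{\op{rig}}$, I would form the scheme-theoretic closure $\cl Z$ of the graph $\Gamma_f$ of $f$ inside $\cl X\times_R\cl Y$. Then $\cl Z$ is a formal model of the graph, the first projection $p:\cl Z\to\cl X$ induces an isomorphism on generic fibres, and the second projection models $f$. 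It remains to turn $p$ into an honest isomorphism: by the flattening theorem one finds an admissible blow-up after which $p$ becomes flat, and a flat morphism of admissible formal schemes that is an isomorphism on generic fibres becomes an isomorphism after a further admissible blow-up. Composing the inverse of $p$ with the second projection then exhibits $f$ as $\psi_{\op{rig}}$ for a formal morphism $\psi:\cl X'\to\cl Y$ defined on a blow-up of $\cl X$, which is exactly a morphism in the localised category.

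The main obstacle is precisely the step just invoked: controlling formal models and formal morphisms simultaneously under blowing up. The two statements that (i) any two formal models of a fixed quasi-compact quasi-separated rigid space are dominated by a common admissible blow-up, and (ii) a morphism of admissible formal schemes that is an isomorphism on generic fibres is invertible in the localised category, are the technical heart of the theorem and both rest on the Raynaud--Gruson flattening machinery for formal schemes; once these are in hand, the gluing and functoriality bookkeeping surrounding them is routine.
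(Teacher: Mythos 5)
The paper does not prove this theorem: it is Raynaud's theorem, quoted as background directly from \cite[Theorem 4.1]{Bosch1993}, and the paper only records three consequences of the proof given there (Remark \ref{FormModProof}: admissible blow-ups become isomorphisms; two formal morphisms agreeing on rigid fibers coincide; a rigid morphism lifts to a formal one after an admissible blow-up of the source). So there is no in-paper argument to compare with, and your proposal has to be measured against the proof in the cited source.

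As a skeleton your outline is correct and is essentially Raynaud's strategy: the generic-fibre functor, inversion of admissible blow-ups (the blown-up ideal is open, hence contains a power of $\pi$ and becomes the unit ideal over $K$), affinoid models via the image $A_0$ of $R\langle T_1,\dots,T_n\rangle$ (note this image is automatically closed, being complete, so taking a closure is not even needed), gluing by induction over a finite affinoid covering, faithfulness from $R$-flatness, and fullness via the schematic closure of the graph plus flattening. The main divergence from the cited source is in fullness: in \cite{Bosch1993} the lifting statement (c) is proved by a more hands-on mechanism, reducing to affinoid pieces and showing that a function of sup-norm at most $1$, written as $a/\pi^n$, becomes integral after blowing up the open ideal $(a,\pi^n)$; the Raynaud--Gruson flattening machinery for formal schemes, which your argument leans on, is developed only in the sequel paper on flattening techniques, where it yields the domination statement (your point (i)). Your graph-plus-flattening route is the one taken in Bosch's later lecture notes; it is more conceptual, while the part-I proof is self-contained at the cost of explicit blow-up computations. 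Two points of looseness worth fixing: first, faithfulness needs, before the algebra argument $\mathcal O_{\cl X}\hookrightarrow \mathcal O_{\cl X}\otimes_R K$ can be applied, that the two morphisms agree on underlying topological spaces so that one may compare sheaf maps affine-locally; this uses that the specialization map hits every closed point of the special fibre of an admissible formal scheme, which is again a consequence of $R$-flatness. Second, the lemma you invoke as ``a flat morphism that is an isomorphism on generic fibres becomes an isomorphism after a further admissible blow-up'' holds in the stronger form that a quasi-compact flat morphism inducing an isomorphism on rigid fibres is already an isomorphism; your weaker phrasing is harmless but slightly obscures where the argument closes. Note finally that the graph of $f$ is a closed immersion precisely because the rigid spaces in (2) are quasi-separated --- this is one of the places where that hypothesis is genuinely used.
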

In the affine case the above functor is defined as follows: to an admissible $R$-algebra $A=R\langle\xi\rangle/\mathfrak{a}$ we associate the affinoid $K$-algebra $A_{\op{rig}}:=A\otimes_R K=R\langle\xi\rangle/\mathfrak{a}K$. Globalizing this construction, as explained in \cite[Section 4]{Bosch1993}, we obtain a functor from admissible formal $R$-schemes to the category of rigid $K$-spaces. 
We denote this functor by $\cl{X}\mapsto \cl{X}_{\op{rig}}$. We call $\cl{X}_{\op{rig}}$ the \textit{rigid analytic fiber} of $\cl{X}$.

In the inverse direction and in the affinoid case, this functor is given by associating to $X_K=\op{Sp}A_K=\op{Sp}K\langle\xi\rangle/\mathfrak{a}K$, the affine formal scheme $\cl{X}=\op{Sp}R\langle\xi\rangle/\mathfrak{a}\cap R\langle \xi\rangle$.
\begin{remark}\label{FormModProof}
	In Section \ref{SectionModFrob} we use in particular some properties of this equivalence, found in \cite[Proof of Theorem 4.1, (a), (b) and (c)]{Bosch1993}:
	\begin{enumerate}[label={(\arabic*)}]\label{FormalRigid}
		\item  the above functor sends admissible blowing ups to isomorphisms: if $\phi:\cl{X}'\to \cl{X}$ is an admissible formal blowing up, then $\phi_{\op{rig}}:\cl{X}'_{\op{rig}} \to \cl{X}_{\op{rig}}$ is an isomorphism.
		\item Two morphisms $\phi,\psi:\cl{Z}\to \cl{X}$ of formal $R$-schemes coincide if $\phi_{\op{rig}}$ and $\psi_{\op{rig}}$ coincide. 
		\item If $\phi_K:\cl{Z}_{\op{rig}}\to \cl{X}_{\op{rig}}$ is a morphism between the rigid analytic fibers of two formal $R$-schemes, then there is an admissible formal blowing up $r:\cl{Z}'\to \cl{Z}$ and a morphism $\phi_{\op{rig}}:\cl{Z}'\to \cl{X}$ such that $\phi_{\op{rig}}=\phi_K\circ r$.
	\end{enumerate}
\end{remark}

\subsection{Cohomology of Berkovich spaces}\label{SubsectionBerCoh}
We state here some results from \cite[Section 5]{Berkovich1996a}. Let $X$ be a scheme of finite type over $\op{Spec}R$, with notations as in the previous sections. We have a diagram, where $X_\eta$ and $X_s$ are the generic and closed fibers respectively:
\begin{equation*}
\begin{tikzcd}
X_\eta\arrow[hook]{r}{j}&X&X_s\arrow{l}{i}\\
X_{\clo{\eta}}\arrow[hook]{r}{\clo j}\arrow{u} &\clo{X}\arrow{u}&X_{\clo{s}}\arrow{u}\arrow{l}{\clo i}.
\end{tikzcd}
\end{equation*}
The \textit{vanishing cycles functor} $\Psi_\eta:\widetilde{X}_{\eta,\op{ \acute{e}t}}\to \widetilde{X}_{\clo{s},\op{ \acute{e}t}}$ is then defined as $\Psi_\eta(\cl F)= \clo{i}^*(\clo{j}_*\clo{\cl F})$.

Let now $\cl{Y}\subset X_s$ be a subscheme of the closed fiber of $X \to \op{Spec}R$. Then the formal completion of $X$ along $\cl{Y}$, $\widehat{X}_{|\cl{Y}}$ is a special formal scheme, in the sense of \cite{Berkovich1996}, whose closed fiber is identified with $\cl{Y}$. By \cite[Proposition 1.3]{Berkovich1996}, there is a canonical isomorphism
\[ (\widehat{X}_{|\cl{Y}})_\eta \simeq \op{\pi}^{-1}(\cl{Y}). \]

As recalled in Remark \ref{RemBerkGen}, we have a morphism $\widehat{X}_\eta \to X^{\op{an}}_\eta$. Moreover, there are canonical morphisms of sites $\widehat{X}_{\eta,\text{\'et}} \to X^{\op{an}}_{\eta,\text{\'et}} \to X_{\eta,\text{\'et}}$. For details about the \'etale topos on an analytic space, we refer to \cite[Section 3]{Berkovich1996a}. For a sheaf $\cl{F}$ on ${X}_{\eta,\text{\'et}}$, we denote by $\cl{F}^{\op{an}}$ and $\widehat{\cl{F}}$ the respective pullbacks to $X^{\op{an}}_{\eta,\text{\'et}}$ and $\widehat{X}_{\eta,\text{\'et}}$, while there is also a canonical morphism of sheaves $\Psi_\eta(\cl{F})\to \Psi_\eta(\widehat{\cl F})$.
For this morphism more is in fact true:
\begin{proposition}[{\cite[Corollary 5.3]{Berkovich1996a}}]
	If $\cl{F}$ is an abelian torsion sheaf on $X_\eta$, then for any $q\ge 0$, there is a canonical isomorphism 
	\[ R^q\Psi_\eta(\cl{F})\overset{\simeq}{\longrightarrow} R^q\Psi_\eta(\widehat{\cl F}). \]
\end{proposition}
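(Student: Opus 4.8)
The plan is to prove the statement stalkwise on the closed fibre and to reduce the comparison to the fact that both sides compute the \'etale cohomology of one and the same tube of the reduction map. Since $R^q\Psi_\eta(\cl{F})$ and $R^q\Psi_\eta(\widehat{\cl{F}})$ are both \'etale sheaves on $X_{\clo s}$, the canonical morphism between them is an isomorphism if and only if it is an isomorphism on every geometric stalk. I therefore fix a geometric point $\clo x\to X_{\clo s}$ and study the induced map
\[ (R^q\Psi_\eta\cl{F})_{\clo x}\longrightarrow (R^q\Psi_\eta\widehat{\cl{F}})_{\clo x}. \]

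First I would identify the left-hand stalk. By the usual local description of the algebraic vanishing cycles, $(R^q\Psi_\eta\cl{F})_{\clo x}$ is the $q$-th \'etale cohomology of the Milnor fibre of $X\to\op{Spec}R$ at $\clo x$, that is, the cohomology of the generic fibre of the strict localisation of $X$ at $\clo x$. The content of Berkovich's theory is that, for torsion coefficients, this algebraic Milnor fibre has the same \'etale cohomology as the Berkovich tube $\pi^{-1}(\clo x)$ of the reduction map; this is the comparison between algebraic and analytic \'etale cohomology, the torsion analogue of the GAGA statements recorded in Remark \ref{RemBerkRig} and Remark \ref{RemBerkGen}.

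Next I would identify the right-hand stalk directly. By Berkovich's construction of vanishing cycles for the formal scheme $\widehat X$, the stalk $(R^q\Psi_\eta\widehat{\cl{F}})_{\clo x}$ is by definition the $q$-th \'etale cohomology of the fibre $\pi^{-1}(\clo x)$, and by Proposition \ref{PropInverseImageBerk} this fibre is canonically the generic fibre $(\widehat X_{|\clo x})_\eta$ of the completion of $X$ along $\clo x$. Since completing $\widehat X$ along $\clo x$ returns the same formal scheme as completing $X$ along $\clo x$ — both $X$ and $\widehat X$ share the closed fibre $X_s$ together with its infinitesimal neighbourhoods — the two tubes coincide as $K$-analytic spaces, compatibly with the morphism $\widehat X_\eta\to X^{\op{an}}_\eta$ of Remark \ref{RemBerkGen}. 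As $\widehat{\cl{F}}$ is the pullback of $\cl{F}$, it restricts to the same coefficient sheaf on this common tube, and the comparison morphism is the identity under these identifications; hence it is an isomorphism and the proposition follows.

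The main obstacle is the comparison theorem invoked in the second step: the identification, for a torsion sheaf, of the \'etale cohomology of the algebraic Milnor fibre with that of the Berkovich tube $\pi^{-1}(\clo x)\simeq(\widehat X_{|\clo x})_\eta$. This rests on the full apparatus of \'etale cohomology of Berkovich spaces — the invariance of \'etale cohomology under analytification for torsion coefficients and the attendant base-change results. The torsion hypothesis on $\cl{F}$ is indispensable precisely here, since these comparison and base-change theorems are available only for torsion coefficients; dropping it would invalidate the stalk computation and therefore the entire argument.
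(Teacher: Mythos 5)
Your argument is circular at its central step, so it does not constitute a proof. You reduce the statement to stalks at geometric points $\clo x$ of the closed fibre and then assert, as ``the content of Berkovich's theory'', that the \'etale cohomology of the algebraic Milnor fibre (the generic fibre of the strict henselisation of $X$ at $\clo x$) agrees with that of the tube $\pi^{-1}(\clo x)\simeq(\widehat X_{|\clo x})_\eta$. But stalkwise that assertion \emph{is} the proposition being proved: $(R^q\Psi_\eta\cl F)_{\clo x}$ is the cohomology of the algebraic Milnor fibre, while $(R^q\Psi_\eta\widehat{\cl F})_{\clo x}$ is the cohomology of the tube --- and the latter identification holds at closed points as a theorem of Berkovich, not ``by definition'' as you claim: $\Psi_\eta$ for the formal scheme is defined as $\clo i^*\clo j_*$ on \'etale topoi, so its stalk is a colimit of cohomologies of generic fibres of \'etale neighbourhoods, and equating this colimit with $\op{H}^q$ of $\pi^{-1}(\clo x)$ already requires nontrivial input. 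The GAGA-type comparisons you invoke (the torsion analogues recorded in Remarks \ref{RemBerkRig} and \ref{RemBerkGen}, i.e.\ invariance of \'etale cohomology under analytification) do not cover this situation: the algebraic Milnor fibre is the generic fibre of a strict henselisation, hence a limit of schemes rather than a variety of finite type over $K$, and the tube $(\widehat X_{|\clo x})_\eta$ is not its analytification. Bridging exactly this gap --- via limit arguments over \'etale neighbourhoods, quasi-compactness and constructibility, and the comparison machinery for \'etale cohomology of analytic spaces --- is the actual substance of Theorem 5.1 and Corollary 5.3 in \cite{Berkovich1996a}; you have assumed it rather than proved it.

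Two of your supporting observations are sound but do not carry the weight: the formal completion of $X$ along $\clo x$ does coincide with the completion of $\widehat X$ along $\clo x$ (compatibly with Proposition \ref{PropInverseImageBerk}), and checking stalks at geometric points over closed points does suffice since the closed fibre is of finite type over a field, hence Jacobson. For calibration, note that the paper itself offers no proof of this proposition: it is quoted verbatim from Berkovich as background, so the benchmark is Berkovich's own argument, which is a genuine comparison theorem and cannot be replaced by the formal stalk identification you propose. A repaired write-up would either cite the comparison for Milnor fibres as the external input (at which point the stalkwise reduction adds nothing beyond the citation), or reproduce Berkovich's limit and constructibility argument in full.
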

\begin{cor}[{\cite[Corollary 5.4]{Berkovich1996a}}]
	If $\cl{X}$ is a smooth formal $R$-scheme and $n$ is an integer coprime to $p$, then 
	\[\Psi_\eta(\mathbb{Z}/ n \mathbb{Z})_{\cl{X}_\eta}
	=(\mathbb{Z}/  n\mathbb{Z})_{\cl{X}_{\overline{s}}}
	\,\,\,  \text{ and } \,\,\, R^q\Psi_\eta(\mathbb{Z}/ n \mathbb{Z})_{\cl{X}_\eta} =0,  q\ge 1 .\]
\end{cor}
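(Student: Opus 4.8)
The plan is to reduce the statement to the analogous fact in algebraic \'etale cohomology --- the triviality of the vanishing cycles of a constant torsion sheaf along a smooth morphism --- and then to transport it to the formal setting by means of the preceding proposition. Both asserted identities concern sheaves on the geometric closed fiber $\cl{X}_{\overline{s}}$, and the formation of $R^q\Psi_\eta$ commutes with \'etale localization on $\cl{X}_s$; the question is therefore local on the closed fiber, and I may assume $\cl{X}=\op{Spf}A$ is affine. Because $\cl{X}$ is smooth over $R$, after shrinking it is \'etale over the formal affine space $\widehat{\mathbb{A}^d_R}=\op{Spf}R\{T_1,\dots,T_d\}$, the formal completion of the algebraic affine space $\mathbb{A}^d_R$. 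Since an \'etale morphism is determined by its closed fiber and finitely presented \'etale algebras algebraize, this chart comes from an \'etale morphism of $R$-schemes of finite type, so that $\cl{X}=\widehat{X}$ for a smooth affine $R$-scheme $X$ of finite type. In particular $\cl{X}_\eta=\widehat{X}_\eta$, and the constant sheaf $(\mathbb{Z}/n\mathbb{Z})_{\cl{X}_\eta}$ is the pullback $\widehat{\cl{F}}$ of $\cl{F}:=(\mathbb{Z}/n\mathbb{Z})_{X_\eta}$.

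Applying the preceding proposition (\cite[Corollary 5.3]{Berkovich1996a}) to the abelian torsion sheaf $\cl{F}$ then gives, for every $q\ge 0$, canonical isomorphisms
\[ R^q\Psi_\eta\big((\mathbb{Z}/n\mathbb{Z})_{X_\eta}\big)\xrightarrow{\ \sim\ }R^q\Psi_\eta\big((\mathbb{Z}/n\mathbb{Z})_{\widehat{X}_\eta}\big)=R^q\Psi_\eta\big((\mathbb{Z}/n\mathbb{Z})_{\cl{X}_\eta}\big), \]
which identify the formal vanishing cycles appearing in the corollary with the algebraic vanishing cycles of $\mathbb{Z}/n\mathbb{Z}$ along the smooth morphism $X\to\op{Spec}R$. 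For the latter I would invoke the classical computation: since $n$ is coprime to $p$ and $X\to\op{Spec}R$ is smooth, the local acyclicity of smooth morphisms (equivalently, the smooth base change theorem) gives $\Psi_\eta(\mathbb{Z}/n\mathbb{Z})=(\mathbb{Z}/n\mathbb{Z})_{X_{\overline{s}}}$ and $R^q\Psi_\eta(\mathbb{Z}/n\mathbb{Z})=0$ for $q\ge 1$. Transporting this back along the isomorphisms above, and using the identification $\cl{X}_{\overline{s}}\simeq X_{\overline{s}}$, yields the two assertions of the corollary.

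I expect the main obstacle to be the local algebraization step: one must verify that a smooth formal $R$-scheme is, locally on its closed fiber, the completion of a smooth $R$-scheme of finite type, and that this identification is compatible both with the constant sheaves on the two generic fibers and with the comparison morphism underlying the preceding proposition. Once this bookkeeping is in place, the remaining content is the purely algebraic local acyclicity statement, which is standard.
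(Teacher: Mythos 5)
Your argument is correct and is essentially the proof Berkovich gives for \cite[Corollary 5.4]{Berkovich1996a} (the paper itself only cites the result): reduce by \'etale localization and algebraization of smooth formal schemes to the case $\cl{X}=\widehat{X}$ with $X$ smooth affine of finite type over $R$, apply the comparison isomorphism of the preceding proposition (Corollary 5.3), and conclude by the classical local acyclicity of smooth morphisms for $n$ prime to $p$. The algebraization step you flag is indeed the only point needing care, and your justification (\'etale charts over $\op{Spf}R\{T_1,\dots,T_d\}$, invariance of the \'etale site under nilpotent thickenings, lifting standard \'etale algebras) is the standard and correct way to handle it.
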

We are mainly interested in comparing the $\ell$-adic cohomology of a subset $\cl{Y}$ of $ X_s$ and the cohomology of its inverse image by the reduction map $\op{\pi}^{-1}(\cl{Y})$. 
The following theorem of Berkovich provides us with a comparison between these cohomology groups:
\begin{theorem}[{\cite[Theorem 3.1]{Berkovich1996}}]
	Let $\cl F$ be an abelian torsion sheaf on $X_\eta$ with torsion orders prime to $p$. Then for $\cl{Y}\subset X_s$ an open in the closed fiber $X_s$ we have
	\[ (\op{R}^q\Psi_\eta\cl F)_{|\cl{Y}} \overset{\simeq}{\longrightarrow} \op{R}^q \Psi_\eta(\widehat{\cl F}_{|\cl{Y}}), \,\,\,\,\,\forall q\ge 0, \]
	where $\widehat{\cl F}$ is the pullback of $\cl F$ to $\widehat{X}_\eta$.
\end{theorem}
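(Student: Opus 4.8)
The plan is to reduce the statement, which is local on the closed fibre, to the global comparison already recorded in the preceding Proposition (\cite[Corollary 5.3]{Berkovich1996a}), applied not to $X$ but to a well-chosen open subscheme. The mechanism is that completing $X$ along an open $\cl Y\subset X_s$ sees only a neighbourhood of $\cl Y$, and that the functor $\Psi_\eta=\clo i^*\clo j_*$ is compatible with restriction to opens of $X$.

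First I would realise $\cl Y$ as a full closed fibre. Since $\cl Y$ is open in $X_s$ and $X_s$ is closed in $X$, the complement $Z:=X_s\setminus\cl Y$ is closed in $X$; then $U:=X\setminus Z$ is open in $X$, of finite type over $R$, and $U_s=\cl Y$. The completion $\widehat X_{|\cl Y}$ is the open formal subscheme of $\widehat X$ supported on $\cl Y$, so since $U$ is an open neighbourhood of $\cl Y$ with $U_s=\cl Y$ it coincides with the completion $\widehat U$ of $U$ along its closed fibre. By Proposition \ref{PropInverseImageBerk} this gives $(\widehat X_{|\cl Y})_\eta\simeq\pi^{-1}(\cl Y)\simeq\widehat U_\eta$, and under this identification $\widehat{\cl F}_{|\cl Y}$ is the pullback of $\cl F|_{U_\eta}$ to $\widehat U_\eta$.

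The key step is the locality of algebraic vanishing cycles under the open immersion $U\hookrightarrow X$, namely
\[ (R^q\Psi_\eta\cl F)_{|\cl Y}\ \simeq\ R^q\Psi_\eta(\cl F|_{U_\eta}),\qquad q\ge 0. \]
Since $\clo i^*$ is exact, $R^q\Psi_\eta\cl F=\clo i^*R^q\clo j_*\clo{\cl F}$. Passing to geometric fibres, the square with horizontal arrows $\clo j,\clo j_U$ and vertical arrows the open immersions induced by $U\subset X$ is Cartesian, so open base change along $\clo U\hookrightarrow\clo X$ gives $(R^q\clo j_*\clo{\cl F})_{|\clo U}\simeq R^q(\clo j_U)_*(\clo{\cl F}_{|U_{\clo\eta}})$; this holds unconditionally, as restriction to an open is computed \'etale-locally and needs no hypothesis on the torsion orders. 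Applying $\clo i_U^*$ and using its compatibility with restriction yields the displayed isomorphism, which one checks to be the restriction of the canonical morphism $\Psi_\eta(\cl F)\to\Psi_\eta(\widehat{\cl F})$ recalled above.

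It then remains to apply the preceding Proposition with $U$ in place of $X$: as $U$ is of finite type over $R$ with closed fibre $\cl Y$ and completion $\widehat U$, it furnishes a canonical isomorphism $R^q\Psi_\eta(\cl F|_{U_\eta})\xrightarrow{\sim}R^q\Psi_\eta(\widehat{\cl F}_{|\cl Y})$ of sheaves on $\cl Y$, whose composite with the localisation isomorphism of the previous paragraph is the asserted map. The main obstacle is precisely this locality step: once open base change for $\clo j_*$ is established, the remainder is bookkeeping with completions together with the already-granted global comparison, whose genuine analytic content we are allowed to assume.
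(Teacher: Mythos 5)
Your argument is correct, but note that the paper itself contains no proof of this statement: it is quoted (with $\cl{Y}$ taken open in $X_s$) from Berkovich's Theorem 3.1 in \cite{Berkovich1996}, where it is proved for an \emph{arbitrary} subscheme $\cl{Y}\subset X_s$ by a genuinely deep argument. Your route instead reduces the open case to the global comparison $R^q\Psi_\eta(\cl F)\xrightarrow{\sim}R^q\Psi_\eta(\widehat{\cl F})$ of \cite[Corollary 5.3]{Berkovich1996a}, which the paper has already recorded, and all three steps are sound: $U:=X\setminus(X_s\setminus\cl{Y})$ is open in $X$, of finite type over $R$, with $U_s=\cl{Y}$ (and in fact $U_\eta=X_\eta$, so the ``restriction'' $\cl F|_{U_\eta}$ is just $\cl F$); the completion $\widehat X_{|\cl{Y}}$ coincides with $\widehat U$ because completing along $\cl{Y}$ only sees an open neighbourhood in which $\cl{Y}$ is closed, and Proposition \ref{PropInverseImageBerk} identifies its generic fiber with $\pi^{-1}(\cl{Y})$; and base change for $R\clo{j}_*$ along the open immersion $\clo U\hookrightarrow\clo X$ is indeed unconditional, which together with $\clo{i}_U^*$-compatibility gives the localization isomorphism $(R^q\Psi_\eta\cl F)_{|\cl{Y}}\simeq R^q\Psi^U_\eta(\cl F)$. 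The identification of your composite with the canonical morphism is the routine adjunction check you indicate. As for what each approach buys: yours is an essentially formal derivation of the theorem exactly as stated, at the cost of the already-granted global comparison; Berkovich's proof buys the statement for arbitrary (in particular closed) $\cl{Y}$, where your localization breaks down, since the formal completion along a non-open subscheme of $X_s$ is not the full completion along the closed fiber of any open algebraic subscheme of $X$. That extra generality is not idle here: the corollaries of this theorem are later applied, in the proof of Proposition \ref{PropCoh}, to $\cl{Y}=\pic^{\nabla}(C)^{\psi=0}$, a \emph{closed} fiber of the Hitchin map, so the open-case statement you prove (and which the paper states) is strictly weaker than what the paper actually uses.
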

As a corollary of this one gets:
\begin{cor}[{\cite[Corollary 3.5]{Berkovich1996}}]
	Let $X,\cl{Y}$ be as before and $\cl F$ be a constructible sheaf on $X_\eta$ with torsion orders prime to $p$. Then there are canonical isomorphisms
	\[ \op{R\Gamma}(\cl{Y},\op{R\Psi_\eta}\cl F)\overset{\simeq}{\longrightarrow} \op{R\Gamma}(\op{\pi}^{-1}(\cl{Y}) , \cl F^{\op{an}}) . \]
\end{cor}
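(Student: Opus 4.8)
The plan is to deduce the corollary from the preceding theorem by upgrading the comparison of vanishing cycle \emph{sheaves} on $\cl{Y}$ to a comparison of their \emph{hypercohomology}, and then to reinterpret the formal side through the completion $\widehat{X}_{|\cl{Y}}$.

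First I would note that a constructible sheaf with torsion orders prime to $p$ is in particular an abelian torsion sheaf, so the preceding theorem applies to $\cl{F}$. The canonical morphism $\Psi_\eta(\cl{F})\to \Psi_\eta(\widehat{\cl{F}})$ recalled above induces, after restriction to $\cl{Y}$, a morphism of complexes $(R\Psi_\eta\cl{F})_{|\cl{Y}}\to R\Psi_\eta(\widehat{\cl{F}}_{|\cl{Y}})$; the theorem asserts precisely that this is an isomorphism on each cohomology sheaf $R^q\Psi_\eta$, hence a quasi-isomorphism of complexes on $\cl{Y}$.

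Next I would apply the derived global-sections functor $R\Gamma(\cl{Y},-)$. Since quasi-isomorphic complexes have canonically isomorphic hypercohomology, this gives
\[ R\Gamma(\cl{Y}, R\Psi_\eta\cl{F})\xrightarrow{\ \simeq\ } R\Gamma(\cl{Y}, R\Psi_\eta(\widehat{\cl{F}}_{|\cl{Y}})), \]
where on the left I use $R\Gamma(\cl{Y},(R\Psi_\eta\cl{F})_{|\cl{Y}})=R\Gamma(\cl{Y},R\Psi_\eta\cl{F})$. It remains to identify the right-hand side with $R\Gamma(\pi^{-1}(\cl{Y}),\cl{F}^{\op{an}})$. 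Here I would invoke Proposition \ref{PropInverseImageBerk}, which exhibits $\widehat{X}_{|\cl{Y}}$ as the special formal scheme with closed fiber $\cl{Y}$ and generic fiber $\pi^{-1}(\cl{Y})\simeq(\widehat{X}_{|\cl{Y}})_\eta$, noting also that $\widehat{\cl{F}}_{|\pi^{-1}(\cl{Y})}=\cl{F}^{\op{an}}_{|\pi^{-1}(\cl{Y})}$. The defining property of the vanishing cycles functor for a formal scheme --- that its hypercohomology over the closed fiber computes the cohomology of the analytic generic fiber, via a Leray-type spectral sequence for the reduction map --- then gives $R\Gamma(\cl{Y},R\Psi_\eta\,\cl{G})\simeq R\Gamma(\pi^{-1}(\cl{Y}),\cl{G})$ for a torsion sheaf $\cl{G}$ on $(\widehat{X}_{|\cl{Y}})_\eta$. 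Taking $\cl{G}=\cl{F}^{\op{an}}_{|\pi^{-1}(\cl{Y})}$ and composing yields the claimed isomorphism.

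The step I expect to be the main obstacle is this last identification: that the hypercohomology of the formal vanishing cycle complex over $\cl{Y}$ recovers the cohomology of the analytic space $\pi^{-1}(\cl{Y})$. This is where the constructibility and prime-to-$p$ hypotheses are genuinely used --- to ensure finiteness of the vanishing cycle sheaves and convergence of the spectral sequence --- and where the passage to geometric fibers must be tracked carefully; it relies on the foundations of Berkovich's vanishing cycle theory for formal schemes rather than on any elementary manipulation.
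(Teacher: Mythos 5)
Your proposal is correct and follows essentially the same route the paper intends: the paper states this corollary without its own proof, as an immediate consequence of the preceding theorem --- i.e.\ the sheaf-level isomorphisms $(\op{R}^q\Psi_\eta\cl F)_{|\cl{Y}}\simeq \op{R}^q\Psi_\eta(\widehat{\cl F}_{|\cl{Y}})$ upgraded to a quasi-isomorphism, followed by applying $\op{R\Gamma}(\cl{Y},-)$ and the Leray-type identification $\op{R\Gamma}(\cl{Y},\op{R}\Psi_\eta(-))\simeq \op{R\Gamma}(\pi^{-1}(\cl{Y}),-)$ that is built into Berkovich's definition of vanishing cycles for the special formal scheme $\widehat{X}_{|\cl{Y}}$ with generic fiber $\pi^{-1}(\cl{Y})\simeq(\widehat{X}_{|\cl{Y}})_\eta$. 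Your closing caveats --- that the constructibility and prime-to-$p$ hypotheses, and the careful tracking of geometric fibers, are where the real content sits --- correctly locate the parts delegated to \cite{Berkovich1996} rather than to any elementary manipulation.
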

In particular:
\begin{cor}[{\cite[Corollary 3.7]{Berkovich1996}}]\label{CorBerkCoh}
	With notations as before, there are isomorphisms	\[ \op{H}^q(\cl{Y}, \mathbb Z/n\mathbb{Z})  \overset{\simeq}{\longrightarrow} \op{H}^q(\op{\pi}^{-1}(\cl{Y}) , \mathbb Z/n\mathbb{Z}) ,\,\,\,\,\,\forall q\ge 0, n\not|p .\]
\end{cor}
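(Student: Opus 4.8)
The plan is to deduce the statement from the preceding result \cite[Corollary 3.5]{Berkovich1996} by specializing it to the constant sheaf and then computing the hypercohomology of each side of the resulting isomorphism of complexes. First I would apply \cite[Corollary 3.5]{Berkovich1996} to the constructible sheaf $\cl{F}=(\mathbb{Z}/n\mathbb{Z})_{X_\eta}$, whose torsion order $n$ is prime to $p$. This yields a canonical isomorphism
\[ \op{R\Gamma}(\cl{Y},\op{R\Psi_\eta}(\mathbb{Z}/n\mathbb{Z})) \overset{\simeq}{\longrightarrow} \op{R\Gamma}(\op{\pi}^{-1}(\cl{Y}),(\mathbb{Z}/n\mathbb{Z})^{\op{an}}) \]
in the derived category, and it then remains to identify the cohomology of each side with ordinary cohomology of the constant sheaf.

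For the right-hand side, the analytic pullback of a constant sheaf is again the constant sheaf, so $(\mathbb{Z}/n\mathbb{Z})^{\op{an}}=(\mathbb{Z}/n\mathbb{Z})_{\op{\pi}^{-1}(\cl{Y})}$, and taking cohomology of $\op{R\Gamma}(\op{\pi}^{-1}(\cl{Y}),(\mathbb{Z}/n\mathbb{Z})^{\op{an}})$ recovers $\op{H}^q(\op{\pi}^{-1}(\cl{Y}),\mathbb{Z}/n\mathbb{Z})$. For the left-hand side, I would show that $\op{R\Psi_\eta}(\mathbb{Z}/n\mathbb{Z})$, restricted to $\cl{Y}$, is concentrated in degree zero and equal to the constant sheaf. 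By \cite[Theorem 3.1]{Berkovich1996}, the restriction $(\op{R}^q\Psi_\eta(\mathbb{Z}/n\mathbb{Z}))_{|\cl{Y}}$ is isomorphic to $\op{R}^q\Psi_\eta$ of the pullback of $\mathbb{Z}/n\mathbb{Z}$ to the generic fiber of the formal completion $\widehat{X}_{|\cl{Y}}$; since this formal scheme is smooth, \cite[Corollary 5.4]{Berkovich1996a} gives $\Psi_\eta(\mathbb{Z}/n\mathbb{Z})=(\mathbb{Z}/n\mathbb{Z})_{\cl{Y}}$ and $\op{R}^q\Psi_\eta(\mathbb{Z}/n\mathbb{Z})=0$ for $q\ge 1$. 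Hence $\op{R\Gamma}(\cl{Y},\op{R\Psi_\eta}(\mathbb{Z}/n\mathbb{Z}))=\op{R\Gamma}(\cl{Y},\mathbb{Z}/n\mathbb{Z})$, whose cohomology is $\op{H}^q(\cl{Y},\mathbb{Z}/n\mathbb{Z})$.

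Taking $q$-th cohomology of the displayed isomorphism and combining the two identifications then produces the asserted isomorphisms $\op{H}^q(\cl{Y},\mathbb{Z}/n\mathbb{Z})\overset{\simeq}{\longrightarrow}\op{H}^q(\op{\pi}^{-1}(\cl{Y}),\mathbb{Z}/n\mathbb{Z})$ for all $q\ge 0$. The main obstacle, and essentially the only point that goes beyond formal bookkeeping, is the triviality of the vanishing cycles of the constant sheaf, namely the vanishing $\op{R}^q\Psi_\eta(\mathbb{Z}/n\mathbb{Z})=0$ for $q\ge 1$ together with the identification in degree zero. This is precisely where smoothness of the formal model is used, via \cite[Corollary 5.4]{Berkovich1996a}, and the comparison would genuinely fail over a singular special fiber, where nonzero higher vanishing cycles obstruct the clean statement.
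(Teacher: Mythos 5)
Your derivation is correct and takes essentially the route the paper intends: the paper states this corollary without proof, flagged only by ``In particular,'' as a specialization of \cite[Corollary 3.5]{Berkovich1996} to the constant sheaf $\mathbb{Z}/n\mathbb{Z}$, exactly as you do, with the triviality of the vanishing cycles (via \cite[Theorem 3.1]{Berkovich1996} and \cite[Corollary 5.4]{Berkovich1996a}) identifying $\op{R\Gamma}(\cl{Y},\op{R\Psi_\eta}(\mathbb{Z}/n\mathbb{Z}))$ with $\op{R\Gamma}(\cl{Y},\mathbb{Z}/n\mathbb{Z})$. Your closing remark correctly isolates the one substantive hypothesis: smoothness is implicit in the paper's ``with notations as before'' (and explicit in Berkovich's original statement), and it does hold in the paper's application, since $\pic^{\nabla}(C_W)$ is smooth over $W$.
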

\section{Moduli of line bundles with  integrable connection}\label{SectionUnivExtension}
Let $X$ be an $S$ scheme, with $S$ a noetherian scheme over $W$, together with a section $x: S\to X$. In general, we can define a functor
\begin{equation}
\op{Pic}^{\#}_{X/S}: (S-\op{Sch}) \to (\op{Groups})
\end{equation} which associates to
an $S$-scheme $T$, the group of isomorphism classes of line bundles $\cl{L}$ on $T\times_S X$ endowed with an integrable connection $\nabla: \cl{L}\to \cl{L}\otimes \op{pr}_X^*\Omega^1_{X/S}$, such that $\cl{L}_{|x\times S}\simeq \cl{O}_S$, where $\op{pr}_X:X\times_S T \to X$ is the projection.
We denote by $\op{Pic}^{\#}(X/S)$ the group of isomorphism classes of  such pairs on $X/S$. As remarked in \cite[(2.5.3)]{Messing1973}, there is an identification 
\[ \op{Pic}^{\#}(X/S)= \op{H}^1(X,\cl{O}_{X} \xrightarrow{\op{dlog}} \Omega^1_{X/S} \to \Omega^2_{X/S} \to \cdots) \]
and then if $X\times_S T=:X_T$ for an $S$- scheme $T$ admits a section over $T$, we also have \cite[(2.6.4)]{Messing1973} 
\begin{equation}\label{PicCoker}
\op{Pic}^{\#}_{X/S}(T) =\op{Coker}(\op{Pic}(T)\xrightarrow{f^*}\op{Pic}^{\#}(X\times_S T/T)).
\end{equation}

In characteristic zero, this functor can be immediately seen to be representable by a quasi-projective scheme:  in \cite[Sections 2.3.3, 2.3.5]{Bost2013} it is actually remarked that for bundles of any rank $r$, this functor is the same as the representation functor $R_{\op{DR}}(X,x,r)$, which is defined in \cite[p. 55 and Theorem 6.13]{Simpson1994}. There, it is also proven that this functor is  representable.

We consider here the subfunctor of degree 0 line bundles as above, with integrable connection on the curve $X/S$, and denote it by $\op{Pic}^{\nabla}_{X/S}$.
In this case we have that this functor is representable also in positive characteristic, since it is the universal vector extension of $\op{Pic}^0_{X/S}$, as was first proven in \cite[Proposition 2.8.1]{Messing1973} and \cite[Theorem 2.6 and 3.2.3]{Mazur1974}. In more detail:

\begin{definition}
	The universal vector extension of an abelian scheme $A$ over $S$  is an abelian scheme $E$ over $S$ sitting in an exact sequence of fppf sheaves
	\begin{equation}\label{univ1}
	0\to V \to E \to A \to 0,
	\end{equation}
	where $V$ is a vector group over $S$, with the following universal property: for any other abelian scheme $E'$ and vector group $V'$ sitting in an exact sequence of fppf sheaves of abelian groups
	\begin{equation}\label{univ2}
	0\to V' \to E' \to A \to 0
	\end{equation}
	there is an $\cl{O}_S$-linear morphism of abelian groups $ \psi: V\to V'$ such that (\ref{univ2}) is isomorphic to the pushout of  (\ref{univ1}) along $\psi$.
\end{definition}
\begin{remark}\cite[(2.6)]{Mazur1974}
	Assume we have an extension \[0\to \mathbb{G}_m\to E \to A\to 0\] of an abelian variety $A$ over a scheme $S$  by $\mathbb{G}_m$. By \cite[(2.2.1)]{Mazur1974} there is an exact sequence:
	\begin{equation}\label{unextdual}
	0\to \omega_A \to \mathbb{E}_A \to \underline{\op{Ext}^1}(A,\mathbb{G}_m)\to 0,
	\end{equation}
	where $\omega_A$ denotes as in \cite{Mazur1974} and \cite{Messing1973} the module of invariant differentials of $A$.
	In \cite[(2.6)]{Mazur1974} it is proven however that, since $\op{Ext}^1(A,\mathbb{G}_m)$ is isomorphic to the dual abelian variety $A^*$, $\mathbb{E}_A$ is representable by a smooth $S$ -group scheme and (\ref{unextdual}) gives the universal extension of $A^*$.
\end{remark}

In the case of a relative curve $C_W/\op{Spec}W$, which admits a section, the same is true: as remarked in \cite[Section 3]{Katz2014} we have an exact sequence
\begin{equation}\label{univExt}
0\to \op{H}^0(C,\Omega^1_{C_W/W}) \to \op{Pic}^{\nabla}(C_W) \to \op{Pic}^0(C_W)\to 0.
\end{equation}
Indeed, every line bundle $\cl{L}$ on $C_W$ over $\op{Spec}W$ which is fiber by fiber of degree 0 admits an $S$- linear connection, \cite[p. 46]{Mazur1974}. Moreover, if we have two connections $(\cl{L},\nabla_1)$ and $(\cl{L},\nabla_2)$, the difference of $\nabla_1$ and $\nabla_2$ is an element $\omega\in \op{H}^0(C,\Omega^1_{C_W/W})$.

In this case however, the dual abelian scheme $A^*$ of the above Remark is isomorphic to the Jacobian $\op{Pic}^0(C_W)$ and we get, by \cite[Theorem 3.2.3]{Mazur1974} that (\ref{univExt}) is the universal extension of $\op{Pic}^0(C_W)$. All details about this case are presented in \cite[Appendix]{Bost2009} and recalled in \cite[Section 2]{Laumon1996}.

We can see the same in a concrete example in the classical case:
\begin{example}\label{ExampleClassMess}[{\cite[(3.0)]{Messing1973}}]
	If $X$ is a non-singular, connected curve over $\mathbb C$, we denote by $J$ its Jacobian
	and choose a canonical (Abel-Jacobi) map $X\to J$. This map induces a map between the corresponding exact sequences on $X$ and $J$, obtained by \cite[(2.6.4)]{Messing1973}:
	\begin{equation}
	\begin{tikzcd}
	0\arrow{r}& \op{H}^0(\Omega^1_X)\arrow{r}& \op{Pic}^\#(X)\arrow{r}&\op{Pic}(X)\arrow{r}& \op{H}^1(\Omega^1_X)\\
	0\arrow{r}& \op{H}^0(\Omega^1_J)\arrow{r}\arrow{u}{\simeq}& \op{Pic}^\#(J)\arrow{r}\arrow{u}&\op{Pic}(J)\arrow{r}\arrow{u}&\op{H}^2(\tau(\Omega^\bullet_J))\arrow{u}
	\end{tikzcd}
	\end{equation}
	where $\tau(\Omega^\bullet_J)$ is the complex $\Omega^1_{X/S} \to \Omega^2_{X/S} \to \cdots$ with $\Omega^1_{X/S} $ is in degree 1. Note that we use here that the global 1-forms on $X$ and $J$ are all closed, since $X$ is a curve and $J$ is smooth and projective. 
	An element in the image  of $\pic^{\#}\to \pic$ has to be a torsion element in $H^2(X,\mathbb Z)$ and $\op{H}^2(J,\mathbb Z)$ respectively. Hence, since $\op{H}^2(X,\mathbb Z)$ and $\op{H}^2(J,\mathbb Z)$ are torsion free,
	its Chern class is zero and therefore element has to be inside $\pic^0$. Moreover, we have an isomorphism $\pic^0(J)\overset{\simeq}{\longrightarrow} \pic^0(X)$. Hence,  $\pic^\#(X) \overset{\simeq}{\longrightarrow} \pic^\#(J)$, which is the universal extension of $\pic^0(X)$, by the abelian variety case, see \cite[Proposition 2.9.2]{Messing1973}.
\end{example}

\section{The subset of isocrystals.}\label{SectionModIsoc}

As the previous section shows, we get a fine moduli space of line bundles with integrable connection, as described above, represented by a $W$-group scheme $\op{Pic}^{\nabla}(C_W/\op{Spec}W)$, which we denote by $\op{Pic}^{\nabla}(C_W)$ for simplicity.
We then have $\op{Pic}^{\nabla}(C_W)\otimes K \simeq \pic^{\nabla}(C_K)$ and $\op{Pic}^{\nabla}(C_W)\otimes {\clo{\mathbb F_q}}\simeq \pic^{\nabla}(C)$ for its generic and closed fibers, as well as $\op{Pic}^{\nabla}(C_W)\otimes \clo K \simeq \pic^{\nabla}(C_{\clo K})$.

\subsection{Hitchin map}
In the following let $X$ be a smooth scheme over $S$, which is a scheme over a field of characteristic $p>0$. We denote the absolute Frobenius of $S$ by $F_S:S\to S$ (i.e. the $p$-th power mapping on $\mathcal O_S$) and by $F_{X/S}:X\to X^{(p)}$ the relative Frobenius, which is defined by the cartesian diagram:
\begin{equation*}
\begin{tikzcd}
X\arrow{r}{F_{X/S}}\arrow{dr}&X^{(p)}\arrow{r}\arrow{d}&X\arrow{d}\\
&S\arrow{r}{F_S}&S.
\end{tikzcd}
\end{equation*}

We denote by $\op{MIC}(X/S)$ the abelian category of $\mathcal O_X$-modules with integrable connection on $X/S$.
Given an element $(\cl{E},\nabla)$ of $\op{MIC}(X/S)$, we denote by $\cl E^{\nabla}$ the kernel of $\nabla$.

For an element $(\cl{E},\nabla)$ of $\op{MIC}(X/S)$, Katz defines in \cite[Section 5]{Katz1970} its $p$-curvature by 
\begin{equation}\label{pCurvature}
\op{Der}(X/S) \to \op{End}_S(\cl{E}) ;\,\,\, D\mapsto (\nabla(D))^p - \nabla(D^p).
\end{equation}
The morphism $\psi(\nabla)$ is $p$-linear \cite[Proposition 5.2]{Katz1970}: it is additive and $\psi(\nabla)(fD)=f^p\psi(\nabla)(D)$, for $f$ and $D$ local sections of $\cl{O}_X$ and $\op{Der}(X/S)$ over an open subset of $S$. Using $p$-linearity, we can consider the $p$-curvature as a global section in $\op{H}^0(X,\op{End}(\cl{E})\otimes F^*_{X/S}\Omega^1_{X^{(p)/S}})$ and in fact, $\psi(\nabla)$ lies in the kernel of the connection on $\op{End}(\cl{E})\otimes F^*_{X/S}\Omega^1_{X^{(p)/S}}$, which is induced by the canonical connection on $F^*_{X/S}\Omega^1_{X^{(p)/S}}$ and the connection $\nabla^{\op{End}}$, which is induced by $\nabla$.

Cartier's theorem provides a characterization of connections that have zero $p$-curvature.

\begin{theorem}[Cartier, Theorem 5.1 in \cite{Katz1970}]
	With notations as before, there is an equivalence of categories between the category of quasi-coherent sheaves on $X^{(p)}$ and the full subcategory of $\op{MIC}(X/S)$ consisting of objects $(\cl E,\nabla)$ whose $p$-curvature is equal to zero. 
	
	More explicitly, the equivalence is given in the following way: given a quasi-coherent sheaf $\cl F$ on $X^{(p)}$, 
	there is a unique integrable $S$-connection 
	$\nabla_{\op{can}}$ on $F^*_{X/S}(\cl F)$, which has $p$-curvature zero and is such that 
	$\cl F\simeq (F^*_{X/S}(\cl F))^{\nabla_{\op{can}}}$.
	
	Conversely, given $(\cl E,\nabla)\in \op{MIC}(X/S)$ of zero $p$-curvature, $\cl E^{\nabla}$ is a quasi-coherent sheaf on $X^{(p)}$.
\end{theorem}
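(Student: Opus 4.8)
The plan is to recognize the statement as faithfully flat descent along the relative Frobenius $F_{X/S}\colon X\to X^{(p)}$, which for $X/S$ smooth of relative dimension $n$ is finite locally free of degree $p^n$. The two functors to be compared are $\Phi\colon \cl F\mapsto (F^*_{X/S}\cl F,\nabla_{\op{can}})$ and $\Psi\colon (\cl E,\nabla)\mapsto \cl E^{\nabla}$, and I would prove they are mutually quasi-inverse. Since the formation of kernels, pullbacks and connections is local on $X$ and commutes with the relevant localizations, I would immediately reduce to the affine situation $S=\op{Spec}R$, $X=\op{Spec}A$ with $A$ étale over $R[t_1,\dots,t_n]$, so that $\Omega^1_{X/S}=\bigoplus_i A\,dt_i$ and $A$ is free over $A^{(p)}=F^*_{X/S}\mathcal O_{X^{(p)}}$ on the monomials $\{t^\alpha : 0\le \alpha_i<p\}$. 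The basic identity underlying everything is $F^*_{X/S}\mathcal O_{X^{(p)}}=\ker\bigl(d\colon \mathcal O_X\to\Omega^1_{X/S}\bigr)$: the horizontal functions are exactly the $p$-th powers coming from $X^{(p)}$.

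For the functor $\Phi$ and the easy composite $\Psi\circ\Phi\cong\op{id}$, I would argue as follows. Because $d$ is $\mathcal O_{X^{(p)}}$-linear (one has $db=0$ for $b\in F^*_{X/S}\mathcal O_{X^{(p)}}$), the formula $\nabla_{\op{can}}(a\otimes s)=da\otimes s$ defines an integrable connection on $F^*_{X/S}\cl F=\mathcal O_X\otimes_{\mathcal O_{X^{(p)}}}\cl F$ for which the sections $1\otimes s$ are horizontal, and this is the \emph{unique} such connection by the Leibniz rule. Its $p$-curvature vanishes, since in coordinates $\nabla_{\op{can}}(\partial_{t_i})$ acts as $\partial_{t_i}\otimes\op{id}$ and $(\partial_{t_i})^p=0$ as a derivation (the product of $p$ consecutive integers is divisible by $p$). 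Writing an arbitrary section in the monomial basis and imposing horizontality then forces all coefficients with $\alpha\neq 0$ to vanish, whence $(F^*_{X/S}\cl F)^{\nabla_{\op{can}}}=1\otimes\cl F\cong\cl F$.

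The substance lies in the other composite. Given $(\cl E,\nabla)$ of zero $p$-curvature, set $\theta_i=\nabla(\partial_{t_i})$. Integrability gives $[\theta_i,\theta_j]=0$, and the vanishing of the $p$-curvature together with $(\partial_{t_i})^p=0$ gives the crucial nilpotence $\theta_i^{\,p}=0$. Since $j!$ is invertible for $0\le j<p$, I can form the operators
\[ h_i(e)=\sum_{j=0}^{p-1}\frac{(-1)^j}{j!}\,t_i^{\,j}\,\theta_i^{\,j}(e), \]
and a direct telescoping computation (using $\theta_i^{\,p}=0$) shows $\theta_i\circ h_i=0$ and $h_i|_{\ker\theta_i}=\op{id}$, so that $h_i$ is a projector of $\cl E$ onto $\ker\theta_i$ satisfying $h_i\equiv\op{id}\bmod (t_i)$. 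For $i\neq j$ the operator $\theta_i$ commutes with multiplication by $t_j$ and with $\theta_j$, hence the $h_i$ commute and preserve one another's kernels, so $h=h_1\cdots h_n$ projects $\cl E$ onto $\bigcap_i\ker\theta_i=\cl E^{\nabla}$. Combining this projector with the freeness of $A$ over $A^{(p)}$ on the monomials $t^\alpha$ yields a decomposition $\cl E=\bigoplus_{0\le\alpha<p}t^\alpha\,\cl E^{\nabla}$, which is precisely the assertion that the canonical horizontal map $\op{can}\colon F^*_{X/S}(\cl E^{\nabla})\to\cl E$ is an isomorphism; in particular $\cl E^{\nabla}$ is a quasi-coherent $\mathcal O_{X^{(p)}}$-module and $\Phi\circ\Psi\cong\op{id}$.

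I expect the main obstacle to be exactly the construction and analysis of the horizontal projector $h$: it is the single place where the zero $p$-curvature hypothesis is used in an essential way, and one must verify carefully that $\theta_i^{\,p}=0$ genuinely follows from the $p$-curvature formula via $(\partial_{t_i})^p=0$, that the $h_i$ commute, and that the resulting decomposition is independent of the chosen coordinate system, so that the local isomorphisms $\op{can}$ glue to a global one and both functors are functorial in $\cl F$ and in $(\cl E,\nabla)$. Once the projector is in place, the remaining points—quasi-coherence of $\cl E^{\nabla}$, naturality, and the mutual compatibility of $\Phi$ and $\Psi$—are formal.
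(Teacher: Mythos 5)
Your argument is correct, but there is nothing in the paper to compare it against: the paper states this theorem purely as background and defers the proof entirely to Katz (Theorem 5.1 of \cite{Katz1970}), so the only meaningful comparison is with the cited source. What you have written is essentially Katz's original argument --- reduction to local \'etale coordinates, the canonical connection with $(F^*_{X/S}\cl F)^{\nabla_{\op{can}}}=1\otimes\cl F$, the nilpotence $\theta_i^{\,p}=\psi(\nabla)(\partial_{t_i})=0$, and the horizontal projector $h=h_1\cdots h_n$ --- so it is the same route, not a new one. Two places merit one more line each. First, $\partial_{t_i}^{\,p}=0$ on $A$ uses not only that the product of $p$ consecutive integers is divisible by $p$, but also that $\partial_{t_i}^{\,p}$ is again a derivation (Jacobson's formula) vanishing on $R[t_1,\dots,t_n]$, together with the uniqueness of extensions of derivations along the \'etale map $R[t_1,\dots,t_n]\to A$. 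Second, the decomposition $\cl E=\bigoplus_{0\le\alpha<p}t^\alpha\,\cl E^{\nabla}$ does not follow from the projector and freeness alone: surjectivity of $\op{can}$ requires the Taylor identity
\[ e=\sum_{0\le\alpha<p}\frac{t^\alpha}{\alpha!}\,h\bigl(\theta^\alpha e\bigr), \]
which holds because each factor $\sum_{0\le j<p}\frac{t_i^{\,j}}{j!}\,h_i\,\theta_i^{\,j}$ equals the identity by the same telescoping you used for $\theta_i\circ h_i=0$, and the factors for distinct $i$ commute; directness then follows from $h\bigl(\theta^\beta\bigl(\sum_\alpha t^\alpha e_\alpha\bigr)\bigr)=\beta!\,e_\beta$ for horizontal $e_\alpha$, with $\beta!$ invertible since $\beta_i<p$. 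Finally, your worry about coordinate-independence and gluing is unnecessary: the unit $\cl F\to(F^*_{X/S}\cl F)^{\nabla_{\op{can}}}$ and the counit $\op{can}\colon F^*_{X/S}(\cl E^{\nabla})\to\cl E$ are defined globally and coordinate-freely, and being an isomorphism is Zariski-local, so the local computations already finish the proof.
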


We now focus in particular on the case $S=\op{Spec}\clo{\mathbb F_q}$. We denote by $\psi(\nabla)$ the corresponding section in $\op{H}^0(X,\op{End}(\cl{E})\otimes F^*_{X/S}\Omega^1_{X^{(p)}/\clo{\mathbb F_q}})$ and note that by \cite[Proposition 5.2.3]{Katz1970}, the $p$-curvature of any connection $(E,\nabla)$ is flat under the tensor product of the canonical connection on $F^*_{X/S}\Omega^1_{X^{(p)}}$ and the one of $\op{End}(\nabla)$ on $\op{End}(E)$. 

The Hitchin map, \cite{Hitchin1987} and \cite[p.12]{Langer2014}, is then defined by mapping an integrable connection to the characteristic polynomial of its $p$-curvature, which by the above discussion will have coefficients in the global symmetric forms on $X^{(p)}$:
\[ \upchi(\psi(\nabla)) : = \op{det}(-\psi(\nabla) + \mu \op{Id}) = \mu^r - a_1\mu^{r-1}+\cdots +(-1)^ra_r \]
with $ a_i\in \op{H}^0(X^{(p)},\op{Sym}^i(\Omega^1_{X^{(p)}})))$ and $r=\op{rank}\cl{E}$.

In the case of a line bundle with connection on the curve $C$, we get a map, see for example \cite[Proposition 3.2]{Laszlo2001},\cite[Section 4]{Braverman2007}, \cite[Definitions 3.12 and 3.16]{Groechenig2016}:
\[ \upchi(\psi(\nabla)): \op{Pic}^{\nabla}(C) \to \cl{A}^1:=\op{H}^0(C^{(p)},\Omega^1_{C^{(p)}}),
\]
which is is known to be proper, \cite[Theorem 3.8]{Langer2014}.

\subsection{Isocrystals}
On the other hand, we say that a point $[(\cl{L},\nabla)]$ of $\op{Pic}^{\nabla}(C_W)$ represents an isocrystal on $C$ if the associated pair $(\cl{L}_{\clo{\mathbb F_q}},\nabla_{\clo{\mathbb F_q}})$ has nilpotent $p$-curvature.
This condition is equivalent to requiring that the characteristic polynomial of the $p$-curvature is zero or that $\upchi([(\cl{L}_{\clo{\mathbb F_q}},\nabla_{\clo{\mathbb F_q}} )])=0$, by definition of the Hitchin map.  Equivalently $(\cl{L}_{\clo{\mathbb F_q}},\nabla_{\clo{\mathbb F_q}} )$ is in the fiber above zero of the Hitchin map, which   we denote  by $\pic^{\nabla}(C)^{\psi=0}$. By the Cartier isomorphism, \cite[Theorem 5.1]{Katz1970}, we have that this fiber is isomorphic to $\pic^0(C^{(p)})$.
% and call it the closed \textit{subset of isocrystals} inside $\pic^{\nabla}(C)$.

Since we are working on the perfect field $\clo{\mathbb F_q}$, we have that the Frobenius on it is an isomorphism. Then also $\pic^0(C^{(p)})$ and $\pic^0(C)$ are isomorphic as schemes. If we worked over $\mathbb F_p$, they would even be isomorphic as $\mathbb F_p$-schemes.

\subsection{A conjecture of Deligne}\label{SectionDeligne}
In \cite[Section 2.17]{Deligne2015} Deligne considers the following situation: let $M_K$, and respectively $M_{\clo K}$ denote the moduli space of vector bundles of rank $r$ on a smooth curve $C_K$, respectively $C_{\clo K}$, endowed with an integrable connection. The space $M_{\clo K}$ is obtained by $M_{K}$ by extension of scalars. Denote as well by $E_r$ the set of isomorphism classes of $\clo{\mathbb Q}_\ell$ - irreducible lisse sheaves of rank $r$ on $C$. For a fixed embedding $\iota: \clo K \to \mathbb C$, denote by $\Sigma$ the Riemann surface obtained by $C_{\clo K}$, by extending the scalars to $\mathbb C$. The space $M_{\mathbb C}$ corresponding to $E_r$ is induced by $M_{\clo K}$ by extension of scalars and the $\ell'$-adic cohomology, for $\ell'\neq p$ a prime number, of $M_{\clo K}$ is isomorphic to the cohomology of $M_{\mathbb C}$ with $\mathbb{Q}_{\ell'}$ coefficients. (Note that all cohomology groups below will denote $\ell'$-adic cohomology groups, denoted by $\op{H}^*(-)$.)

\begin{conj*}[{\cite[Conjecture 2.18]{Deligne2015}}]\label{ConjDeligne}
	The cohomology of $M_{\clo K}$ admits an endomorphism $V^*$, such that for all $n\ge 1$, the number $N_n$ of fixed points of $V^*$ on $E_r$ is given by
	\[ N_n=\sum_i(-1)^i\op{Tr}(V^{*n}, \op{H}^i(M_{\clo K})).   \]
\end{conj*}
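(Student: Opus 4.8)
The plan is to verify the conjecture in the rank $1$ case treated here, where $M_{\clo K}=\pic^\nabla(C_{\clo K})$ and, by \eqref{univExt}, this space is the universal vector extension of the Jacobian $\pic^0(C_W)$, i.e. a torsor over $\pic^0(C_{\clo K})$ under the vector group $\op{H}^0(C,\Omega^1_{C_W/W})$. The candidate for Deligne's open subspace $M^0$ is the tube $]\pic^\nabla(C)^{\psi=0}[$ of the isocrystal locus, which by Cartier's theorem satisfies $\pic^\nabla(C)^{\psi=0}\cong\pic^0(C^{(p)})\cong\pic^0(C)$. Thus the two properties Deligne predicts split into: (i) a cohomology comparison $\op{H}^*(M_{\clo K})\xrightarrow{\sim}\op{H}^*(M^0)$ with $\ell'$-adic coefficients, and (ii) a Frobenius endomorphism $V$ of $M^0$ whose induced $V^*$ has Lefschetz numbers equal to the counts $N_n$.

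First I would establish (i). On the algebraic side, homotopy invariance of $\ell'$-adic \'etale cohomology for the affine-bundle projection $\pic^\nabla\to\pic^0$ reduces $\op{H}^*(M_{\clo K})$ to $\op{H}^*(\pic^0(C_{\clo K}))$, and Berkovich's comparison of algebraic and analytic cohomology identifies this with $\op{H}^*(M_{\clo K}^{\op{an}})$. On the other side, $M^0$ is the generic fibre of the completion of $\pic^\nabla(C_W)$ along the \emph{proper} isocrystal locus $\pic^\nabla(C)^{\psi=0}\cong\pic^0(C)$, so the vanishing-cycle and tube-comparison results of Section~\ref{SubsectionBerCoh} (Corollary~\ref{CorBerkCoh}) give $\op{H}^*(M^0)\cong\op{H}^*(\pic^0(C))$. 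Since $\pic^0(C)$ and $\pic^0(C_{\clo K})$ have the same cohomology by smooth proper base change over $W$, both sides equal $\op{H}^*(\pic^0)$; this is Proposition~\ref{PropCoh}.

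Next I would construct the Frobenius in (ii). Because $M^0$ is the analytic locus of line bundles with integrable connection that are isocrystals, pullback along the relative Frobenius endows every such object with a canonical $F$-structure, and the description of $M^0$ as a subfunctor of $\pic^\nabla(C_K)^{\op{an}}$ (Section~\ref{SectionModFrob}) lets me define $V=\op{Frob}^*\colon M^0\to M^0$. Transporting through (i), the induced $V^*$ on $\op{H}^*(M_{\clo K})$ is the arithmetic Frobenius attached to $C_0/\mathbb F_q$ acting on $\op{H}^*(\pic^0)$. Finally I would count fixed points: by the Grothendieck--Lefschetz trace formula, $\sum_i(-1)^i\op{Tr}(V^{*n},\op{H}^i(M_{\clo K}))$ equals the number of fixed points of $V^n$ on the isocrystal locus $\pic^0(C)$, namely $\#\pic^0(C_0)(\mathbb F_{q^n})$. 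Geometric class field theory then identifies these with the rank $1$ $\ell'$-adic local systems on $C$ fixed by $V^{*n}$, i.e. with the Frobenius-fixed elements of $E_1$, yielding $N_n$.

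The hard part will be (ii), and specifically two points. First, defining $V$ rigorously on the analytic subspace: one must present $]\pic^\nabla(C)^{\psi=0}[$ as an honest subfunctor through formal models and admissible blow-ups, and check, using the Bosch--L\"utkebohmert machinery recalled in Remark~\ref{FormModProof}, that both the isocrystal condition and the Frobenius are independent of the chosen blow-up and functorial. Second, one must verify the compatibility asserted above, namely that the analytically defined $V^*$ coincides, under the comparison isomorphism of (i), with the arithmetic Frobenius on $\op{H}^*(\pic^0)$ whose Lefschetz numbers are the local-system counts; this is where the crystalline interpretation of $M^0$ does the essential work. By contrast, the cohomology comparison (i) is comparatively routine once Berkovich's vanishing-cycle theorems are in hand.
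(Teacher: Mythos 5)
Your plan follows the paper's own program step for step: your part (i) is exactly Proposition \ref{PropCoh} (Berkovich's tube comparison via Corollary \ref{CorBerkCoh}, the Cartier identification of the fiber with $\pic^0$, smooth base change, and homotopy invariance of the universal extension), your part (ii) is the formal-model/admissible-blow-up subfunctor description of Section \ref{SectionModFrob} with Frobenius pullback of isocrystals, and your fixed-point count is Deligne's own Proposition 2.20 (autoduality of $\pic^0$ trading $V$ for Frobenius, which is the class-field-theoretic content you invoke). One caveat: the statement is a conjecture that the paper verifies only as rank-$1$ evidence, and like the paper you leave unproved the compatibility of the analytically defined $V^*$ with the $V^*$ of (\ref{ExampleDelEq4}) under the comparison isomorphism --- you correctly flag this as the hard part, and the paper likewise stops at the level of Example \ref{ExampleDeligne}, the full conjecture being settled by Yu through entirely different methods.
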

In general, we do not have a Frobenius action on the moduli space of vector bundles with integrable connection. Indeed, Deligne expects that there should exist an open subspace $M^0_{\clo K}$ inside the Berkovich analytification of $M_{\clo K}$, which corresponds to the sublocus of isocrystals and such that it has the following two properties:
\begin{enumerate}[label={(\arabic*)}]
	\item the restriction morphism $\op{H}^*(M_{\clo K}) = \op{H}^*(M^{\op{an}}_{\clo K})\to \op{H}^*(M^0_{\clo K})$ is an isomorphism and
	\item a crystalline interpretation of $M^0$ allows us to define $V= \op{Frob}^*: M^0_{\clo K}\to M^0_{\clo K}$, which induces $V^*$ on cohomology.
\end{enumerate}
Then the number of fixed points of the action of $V^n$ would be given by
\begin{equation}\label{DeligneFixedPoints}
N_n:=\sum (-1)^n\op{Tr}(V^{*,n},\op{H}^i(M_{\clo K})).
\end{equation}
The conjectured morphism $V:M^0_{\clo K}\to M^0_{\clo K}$ should send $M^0_{\clo K}$ into a proper open subset of $M^0_{\clo K}$, and one can take ordinary cohomology, instead of cohomology with compact support in the Lefschetz Trace formula.

Deligne provided in \cite[Example 2.19]{Deligne2015} an example for this conjecture in the rank 1 case, without explaining why the properties (1) and (2) from above are fulfilled. 
\begin{example}[{\cite[Example 2.19 and Proposition 2.20]{Deligne2015}}]
	Denote the absolute Frobenius of $C_0$ by $F_{C_0}$ and that of $\pic^{0}(C_0)$ by $F_{\pic^{0}(C_0)}$. Pullback by the absolute Frobenius of $C_0$ defines $F^*_{C_0}$, an endomorphism of $\pic^0(C_0)$ for which \cite[(2.2.2)]{Deligne2015}:
	\begin{equation}
	F^*_{C_0}\circ F_{\pic^0(C_0)} =F_{\pic^0(C_0)}\circ F^*_{C_0} = q.
	\end{equation}
	Because of this we denote $F^*_{C_0}$ by $V$ (Verschiebung).

	Extending scalars to the algebraic closure $\clo{\mathbb F_q}$, we obtain the corresponding endomorphisms $F_C$  on $C$: $F_C$ maps a point of $C$ with affine coordinates $(x_1,\dots ,x_n)$ to the point with coordinates $(x_1^q,\dots,x_n^q)$.   We call this the \textit{Frobenius endomorphism} of $C$. The fixed points of the action of this map are exactly the $\mathbb F_q$ points of $C$, and are computed by the Lefschetz Trace formula, given below in (\ref{traceForm}). 
	
	The morphism $F_{C}$ induces an endomorphism on cohomology:
	\[ \op{H}^i(C,\mathbb{Q}_\ell)\rightarrow \op{H}^i(C,\mathbb{Q}_\ell)\]

	In the rank 1 case, the moduli space $M_{\clo K}$ is the same as $\pic^\nabla(C_{\clo K})$, which, as already discussed in Section \ref{SectionUnivExtension}, is the universal extension of $\pic^0(C_{\clo K})$.
	By homotopy invariance, it is true that
	\begin{equation}\label{ExampleDelEq1}
	\op{H}^*(\pic^0(C_{\clo K}))\overset{\simeq}{\longrightarrow}\op{H}^*(\pic^\nabla(C_{\clo K}))
	\end{equation} 
	while we also have a natural isomorphism by smooth base change
	\begin{equation}\label{ExampleDelEq2}
	\op{H}^*(\pic^0(C))\overset{\simeq}{\longrightarrow} \op{H}^*(\pic^0(C_{\clo K})).
	\end{equation}
	On $\clo{\mathbb{F}}_q$, we have an endomorphism $V:\pic^0(C)\to \pic^0(C)$ induced by functoriality from the Frobenius endomorphism of $C$ and a pullback on cohomology:
	\begin{equation}\label{ExampleDelEq3}
	V^*: \op{H}^*(\pic^0(C))\longrightarrow\op{H}^*(\pic^0(C))
	\end{equation}
	and because of (\ref{ExampleDelEq1}) and (\ref{ExampleDelEq2}) this defines
	\begin{equation}\label{ExampleDelEq4}
	V^*: \op{H}^*\pic^{\nabla}(C_{\clo K})\longrightarrow\op{H}^*\pic^{\nabla}(C_{\clo K}).
	\end{equation}
	In \cite[Proposition 2.20]{Deligne2015}, Deligne further computes the fixed points of this action for $n=1$ in (\ref{DeligneFixedPoints}).  Note that one can reduce to this case, taking an extension of scalars from $\mathbb F_q$ to $\mathbb F_{q^n}$. As also explained in \cite[(6.1)]{Deligne2015}, the number of fixed points by the Frobenius correspond to the points fixed by $V$ on $E_1$, and as we see later by our crystalline interpretation, this number should be the number of $F$-isocrystals defined over $\mathbb{F}_q$.

	The Frobenius endomorphism $F_{\pic^0(C)}$ and  $V$ on $\pic^0(C)$ are transpose to each other, since $\pic^0(C)$ is auto-dual, and we therefore obtain, \cite[Proposition 2.20]{Deligne2015}:
	\begin{equation}
	\op{Tr}(V^{*},\op{H}^i(\pic^{0}(C))) = \op{Tr}(F_{\pic^0(C)}^{*},\op{H}^i(\pic^0(C)))
	\end{equation}
	and
	\begin{equation}\label{traceForm}
	\sum(-1)^i  \op{Tr}(V^{*},\op{H}^i(\pic^{0}(C))) = |\pic^0(C_0)(\mathbb F_q)|.
	\end{equation}
\end{example}
Our goal  is mainly to explain Deligne's example and provide a crystalline interpretation of it. We revisit this at the end of the article, in Example \ref{ExampleDeligne}.
In the spirit of the  conjecture, we are considering the Berkovich analytification of $\pic^{\nabla}(C_W)_{K}$ and its base change to $\clo{K}$, $\pic^{\nabla}(C_W)_{\clo K}$. 
%%!
As recalled in Proposition \ref{DefReduc}, we have the anti-continuous reduction map
\begin{equation}\label{reductionMap}
\op{\pi}: \widehat{(\pic^{\nabla}(C_W))}_{K}\to \pic^{\nabla}(C)
\end{equation}
where $\widehat{(\pic^{\nabla}(C_W))}_{K}\hookrightarrow \pic^{\nabla}(C)^{\op{an}}_{ K}$, which would be an isomorphism if $\pic^{\nabla}(C)_{K}$ was proper, see Remark \ref{RemBerkGen}. We denote the inverse image of $\pic^{\nabla}(C)^{\psi=0}$, which is also called the \textit{tube} of $\pic^{\nabla}(C)^{\psi=0}$ inside the analytification, by $]\pic^{\nabla}(C)^{\psi=0}[$. It is an open subset of $\pic^{\nabla}(C)^{\op{an}}_{K}$ and is actually isomorphic to
\[\widehat{(\pic^{\nabla}(C_W)}_{\pic^{\nabla}(C)^{\psi=0}})_{K} ,\]
the generic fiber of the completion of $\pic^{\nabla}(C_W)$ along the closed subset $\pic^{\nabla}(C)^{\psi=0}$, see \ref{PropInverseImageBerk}.

\section{Comparison of cohomology groups.}\label{SectionModCoh}
In order to verify the aforementioned conjecture of Deligne, we have to compare the cohomology groups of $M^0_{\clo K}$ and $M_{\clo K}$ of Conjecture \ref{ConjDeligne}, which in our case are $]\pic^{\nabla}(C)^{\psi=0}[_{\clo K}$ and $\pic^{\nabla}(C)^{\op{an}}_{\clo K}$. 
As before, cohomology here means $\ell$-adic cohomology.

We can use the results recalled in Section \ref{SubsectionBerCoh} in order to prove the following comparison:
\begin{proposition}\label{PropCoh}
	There is an isomorphism of cohomology groups
	\[ \op{H}^*(\pic^{\nabla}(C_{\clo K})^{\op{an}},\mathbb Q_{\ell})\simeq
	\op{H}^* (]\pic^{\nabla}(C)^{\psi=0}[_{\clo K},\mathbb Q_{\ell})\]
\end{proposition}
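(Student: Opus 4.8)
The plan is to show that both cohomology groups are canonically isomorphic to $\op{H}^*(\pic^0(C),\mathbb{Q}_\ell)$. I would work first with finite coefficients $\mathbb{Z}/\ell^m\mathbb{Z}$, $\ell\neq p$, and recover the $\mathbb{Q}_\ell$-statement at the end by passing to the inverse limit over $m$ and tensoring with $\mathbb{Q}_\ell$; since every space involved has finite étale cohomology and every isomorphism produced below is canonical, the transition maps are compatible and this causes no difficulty. The key geometric input is that $\pic^{\nabla}(C_W)$, being the universal vector extension \eqref{univExt} of the abelian scheme $\pic^0(C_W)$ by the vector group $\op{H}^0(C,\Omega^1_{C_W/W})$, is \emph{smooth} over $W$, and that its Hitchin fibre $\pic^{\nabla}(C)^{\psi=0}$ is the closed subscheme of the special fibre which the Cartier isomorphism identifies with $\pic^0(C^{(p)})\cong\pic^0(C)$.

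For the left-hand side I would work entirely on the generic fibre. The projection $\pic^{\nabla}(C_{\clo K})^{\op{an}}\to\pic^0(C_{\clo K})^{\op{an}}$ is the analytification of a torsor under the vector group $\op{H}^0(C,\Omega^1)$, so by homotopy invariance of étale cohomology (the analytic affine space being contractible) it induces an isomorphism on cohomology; since $\pic^0(C_{\clo K})$ is an abelian variety, hence proper, the comparison theorem gives $\op{H}^*(\pic^0(C_{\clo K})^{\op{an}})\simeq\op{H}^*(\pic^0(C_{\clo K}))$, and smooth proper base change along $\pic^0(C_W)/W$ then yields $\op{H}^*(\pic^0(C_{\clo K}))\simeq\op{H}^*(\pic^0(C))$. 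This chain is the analytic counterpart of \eqref{ExampleDelEq1} and \eqref{ExampleDelEq2}; alternatively one may invoke directly Berkovich's comparison theorem \cite{Berkovich1993} between $\op{H}^*(\pic^{\nabla}(C_{\clo K}))$ and $\op{H}^*(\pic^{\nabla}(C_{\clo K})^{\op{an}})$ for torsion coefficients prime to $p$ and then apply \eqref{ExampleDelEq1}, \eqref{ExampleDelEq2} algebraically. Either way the left-hand term equals $\op{H}^*(\pic^0(C))$.

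For the right-hand side I would use the vanishing-cycle formalism of Section \ref{SubsectionBerCoh}. Because $\pic^{\nabla}(C_W)$ is smooth over $W$, \cite[Corollary 5.4]{Berkovich1996a} shows that the nearby-cycle complex $\op{R}\Psi_\eta(\mathbb{Z}/\ell^m)$ on the special fibre is the constant sheaf $\mathbb{Z}/\ell^m$ concentrated in degree $0$; Berkovich's comparison \cite[Corollary 3.5]{Berkovich1996}, equivalently Corollary \ref{CorBerkCoh}, then identifies the cohomology of the tube of a subscheme of the special fibre with the cohomology of that subscheme. Applied to $\cl Y=\pic^{\nabla}(C)^{\psi=0}$, using the identification of $]\pic^{\nabla}(C)^{\psi=0}[$ with $\pi^{-1}(\pic^{\nabla}(C)^{\psi=0})$ furnished by Proposition \ref{PropInverseImageBerk}, this gives
\[ \op{H}^*\bigl(\,]\pic^{\nabla}(C)^{\psi=0}[_{\clo K},\,\mathbb{Z}/\ell^m\bigr)\simeq\op{H}^*\bigl(\pic^{\nabla}(C)^{\psi=0},\,\mathbb{Z}/\ell^m\bigr). \]
By the Cartier isomorphism \cite[Theorem 5.1]{Katz1970} and the perfectness of $\clo{\mathbb F_q}$, the right-hand term is $\op{H}^*(\pic^0(C),\mathbb{Z}/\ell^m)$. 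Passing to the limit and tensoring with $\mathbb{Q}_\ell$ identifies both sides with $\op{H}^*(\pic^0(C),\mathbb{Q}_\ell)$, proving the proposition.

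The main obstacle is the application of the vanishing-cycle comparison to $\cl Y=\pic^{\nabla}(C)^{\psi=0}$: this subscheme is \emph{closed} in the special fibre, whereas Corollary \ref{CorBerkCoh} is phrased for open subsets, so one must either invoke the locally-closed version of \cite[Corollary 3.5]{Berkovich1996} or argue that, once the smoothness of $\pic^{\nabla}(C_W)/W$ has trivialised the nearby cycles, the comparison reduces to $\op{H}^*(\cl Y,\mathbb{Z}/\ell^m)\simeq\op{H}^*(\pi^{-1}(\cl Y),\mathbb{Z}/\ell^m)$ for the proper closed subscheme $\cl Y\cong\pic^0(C)$. A secondary point to verify is the compatibility of the whole chain with the base change from $K$ to $\clo K$ (equivalently to its completion) and with the transition from $\mathbb{Z}/\ell^m$- to $\mathbb{Q}_\ell$-coefficients; both hold because the relevant cohomology groups are finite and all the identifications above are canonical.
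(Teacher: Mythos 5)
Your proof is correct and follows essentially the same route as the paper's: both sides are identified with $\op{H}^*(\pic^{0}(C),\mathbb Q_{\ell})$, using Berkovich's vanishing-cycles comparison (Corollary \ref{CorBerkCoh}) together with the Cartier isomorphism for the tube side, and homotopy invariance of the universal extension, smooth proper base change and the analytic comparison theorem of \cite{Berkovich1993} for the other side. Your two caveats are well taken but harmless: Berkovich's Theorem 3.1 in \cite{Berkovich1996} is in fact proved for an arbitrary subscheme $\cl{Y}\subset X_s$ (the word ``open'' in the paper's restatement of it is an inaccuracy), so it does apply to the closed Hitchin fibre $\pic^{\nabla}(C)^{\psi=0}$, and your reduction to $\mathbb Z/\ell^m$-coefficients followed by a limit argument is the careful version of the paper's direct (and standard, if slightly abusive) use of $\mathbb Q_{\ell}$-coefficients in statements proved for torsion sheaves.
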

\begin{proof}
	By Corollary \ref{CorBerkCoh} we have
	\begin{equation}\op{H}^* (]\pic^{\nabla}(C)^{\psi=0}[,\mathbb Q_{\ell}) \simeq
	\op{H}^* (\pic^{\nabla}(C)^{\psi=0},\mathbb Q_{\ell})
	\end{equation}
	but by the smooth base change theorem, see \cite[Theorem 20.1]{Milne08lectureson} 
	\begin{equation}\op{H}^* (]\pic^{\nabla}(C)^{\psi=0}[,\mathbb Q_{\ell}) \simeq
	\op{H}^* (]\pic^{\nabla}(C)^{\psi=0}[_{\clo K},\mathbb Q_{\ell})
	\end{equation}
	and as explained in Section \ref{SectionModIsoc},
	\begin{equation}\label{morFib} \op{H}^* (\pic^{\nabla}(C)^{\psi=0},\mathbb Q_{\ell}) \simeq \op{H}^* (\pic^{0}(C^{(p)}),\mathbb Q_{\ell}) \simeq \op{H}^* (\pic^{0}(C),\mathbb Q_{\ell}). \end{equation}
	%	(By the specialization theorem \cite[Theorem 3.1, Expos\'e V]{SGA4.5} or:)
	By Theorem 20.5 of Milne \cite{Milne08lectureson} we have that, since $\pic^0(C_W)$ is proper over $\op{Spec}W$, it is also true that
	\begin{equation} \op{H}^* (\pic^{0}(C),\mathbb Q_{\ell})\simeq \op{H}^* (\pic^{0}(C)_{\clo K},\mathbb Q_{\ell}). \end{equation}
	However, since $\pic^{\nabla}(C)_{\clo K}$ is the universal extension of $\pic^{0}(C)_{\clo K}$, by Section \ref{SectionUnivExtension}, we have
	\begin{equation}  \op{H}^* (\pic^{0}(C)_{\clo K},\mathbb Q_{\ell})
	\simeq \op{H}^* (\pic^{\nabla}(C_{\clo K}),\mathbb Q_{\ell}) \end{equation}
	Finally, the latter cohomology group is isomorphic to $ \op{H}^* (\pic^{\nabla}(C_{\clo K})^{\op{an}},\mathbb Q_{\ell})$, by \cite[Corollary 7.5.4]{Berkovich1993}.
\end{proof}
\section{Frobenius action}\label{SectionModFrob}
The goal of this section is to show that there is a Frobenius action on $]\pic^{\nabla}(C)^{\psi=0}[$, which induces a Frobenius pullback morphism on cohomology. 
In general, if we have an $S$-scheme $M$ which represents a functor $G$, then to define a morphism $M\to M$, it is enough to define such a morphism $G(S')\to G(S')$ for all $S$-schemes $S'$.

With this in mind, we obtain in this section a moduli interpretation of $]\pic^{\nabla}(C)^{\psi=0}[$ and this yields, because of its nature, a natural Frobenius action. 

\subsection{Functorial interpretation}
We are interested in the functorial interpretation of the subset of isocrystals inside the analytic space $ \pic^{\nabla}(C)^{\op{an}}_{K}$. We consider here the rigid analytification.
Indeed, the Berkovich and rigid analytifications of an algebraic variety are compatible constructions, as explained in Remark \ref{RemBerkRig}. 

From now on, $(-)^{\op{an}}$ denotes the rigid analytification functor and $(-)_{K}$ denotes the rigid analytic generic fiber.

By Remark \ref{AnalytPic}, we can define the functor $\textbf{Pic}^{\nabla}_{C_{K}^{\op{an}}/{K}, x^{\op{an}}}$ which is represented by the analytification of $\pic^{\nabla}(C_{K})$.
%\todo[backgroundcolor=pink,size=\tiny,noline]{details} 
Our goal is therefore to characterize $]\pic^{\nabla}(C)^{\psi=0}[$ as a subfunctor of  $\textbf{Pic}^{\nabla}_{C_{K}^{\op{an}}/{K}}$. For this it is enough to describe the set \[\op{Hom}(S,]\pic^{\nabla}(C)^{\psi=0}[ ) \] for a rigid analytic space $S$. This will be the set of line bundles with connection on $S\times_K C^{\op{an}}_K\to S$, as before, with some extra property. Hence, it is enough to assume $S$ is an affinoid and equal to $\op{Sp}(A)\otimes {K}=\op{Sp}A_{ K}$, for some $W$-algebra $A$.

A morphism of rigid analytic spaces 
\begin{equation}\label{morRig}
S=\op{Sp}A_{K} \xrightarrow{\phi_{K}} ]\pic^{\nabla}(C)^{\psi=0}[=
\widehat{(\pic^{\nabla}(C_W)}_{\pic^{\nabla}(C)^{\psi=0}})_{ K}
\end{equation} extends as recalled in Remark \ref{FormModProof} to a morphism of formal schemes 
\begin{equation}\label{morFor}
\cl S' \xrightarrow{\phi'} \widehat{\pic^{\nabla}(C_W)}_{\pic^{\nabla}(C)^{\psi=0}}
\end{equation}
where $\cl S'\rightarrow \cl S=\op{Spf}(A)$ is an admissible formal blow-up.
%Denote for simplicity $\cl Y =\widehat{\pic^{\nabla}(C_W)}_{\pic^{\nabla}(C)^{\psi=0}}$,
For $C_W\to \op{Spf}W$ the formal lift of $C$, having a morphism as in (\ref{morFor}) means that the morphism
$\cl S' \to \pic^{\nabla}(C_W)$ factors as

\begin{equation}\label{morAdmBlow}
\begin{tikzcd}[row sep=2em]
\mathscr{S}' \arrow{rr} \arrow{dr} && \op{Pic}^{\nabla}(C_W)\\
&\widehat{\pic^{\nabla}(C_W)}_{\pic^{\nabla}(C)^{\psi=0}}\arrow{ur}
\end{tikzcd}
\end{equation} 
which in turns yields
\begin{equation}
\begin{tikzcd}[row sep=2em]
S'_{\clo{\mathbb{F}_q}} \arrow{rr} \arrow{dr} && \op{Pic}^{\nabla}(C)\\
&\op{Pic}^{\nabla}(C)^{\psi=0} \arrow{ur}
\end{tikzcd}
\end{equation}
This is equivalent to saying that the pair $(\cl L,\nabla)$ defined by (\ref{morFor}) is actually an isocrystal on $\cl S' \times C_W\to \cl S'$.
\begin{remark}
	This construction is independent of the choice of the admissible blow-up: suppose there is another admissible blow-up $\cl S''$ of $\cl S$ such that the morphism (\ref{morRig}) lifts to a morphism of formal schemes 
	\begin{equation}
	\cl S''\xrightarrow{\phi''} \widehat{\pic^{\nabla}(C_W)}_{\pic^{\nabla}(C)^{\psi=0}}.
	\end{equation}
	Then as in Remark \ref{FormModProof} we have that $\phi'$ and $\phi''$ coincide, since the rigid analytic generic fibers $\cl S_{K}'$ and $\cl S_{K}''$ coincide; they are isomorphic to $S$. 
\end{remark}
On the other hand, if we start with a pair of a line bundle with connection on $S\times C_{K}^{\op{an}}\to S$ for an affinoid $S$, which is an isocrystal on $\cl S' \times C_W\to \cl S'$, for $\cl S'\to \cl S$ an admissible blow-up of $\cl S$, this means by definition that we have a morphism as in  (\ref{morAdmBlow}). Taking the associated map between the rigid generic fibers, we obtain a morphism \[S' \to  ]\pic^{\nabla}(C)^{\psi=0}[. \] By \cite[(a) in p.307]{Bosch1993}, recalled in Remark \ref{FormModProof}, there is an isomorphism between the rigid generic fiber of $\cl S$ and $\cl S'$, which means that this morphism is an element of $\op{Hom}(S,]\pic^{\nabla}(C)^{\psi=0}[ )$. Note that the category of isocrystals on a formal lift  of a scheme depends only on its reduction, because of the nilpotence condition.
%cite:  e.g. Crew: Isocrystals on the line).\todo{!}

By definition of taking the extension of a morphism between rigid spaces to a morphism of the associated formal models and that of taking the rigid generic fiber of a morphism of formal spaces, these two constructions are inverse to each other.
\begin{proposition}
	We obtain therefore an isomorphism between the set 
	\begin{equation}
	\op{Hom}(S,]\pic^{\nabla}(C)^{\psi=0}[ )
	\end{equation} and the set 
	\begin{multline}\label{setHom}
	\{ (\cl{L},\nabla)\text{ line bundles with integrable connection on }S\times C^{\op{an}}_K \text{ which are}\\\text{ isocrystals on }
	\cl S'\times C_W\to \cl S' \text{, for } S'\text{ an admissible formal blow-up of }S\}
	\end{multline}
	We denote the latter by $]\pic^{\nabla}(C)^{\psi=0}[(S)$.
\end{proposition}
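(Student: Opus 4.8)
The plan is to exhibit two mutually inverse maps between $\op{Hom}(S, ]\pic^{\nabla}(C)^{\psi=0}[)$ and the set (\ref{setHom}), using Raynaud's equivalence and the moduli interpretation of $\pic^{\nabla}(C_W)$ recalled above; much of the construction has already been set up in the preceding paragraphs, so the work is to organize it and verify well-definedness. First I would construct the forward map, from morphisms to pairs. Starting from a morphism $\phi_K \colon S = \op{Sp}A_K \to ]\pic^{\nabla}(C)^{\psi=0}[$, I would use the identification $]\pic^{\nabla}(C)^{\psi=0}[ \simeq \widehat{(\pic^{\nabla}(C_W)}_{\pic^{\nabla}(C)^{\psi=0}})_K$ from Proposition \ref{PropInverseImageBerk} to regard $\phi_K$ as a morphism into the rigid generic fiber of the formal completion. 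By property (3) of Remark \ref{FormModProof}, after an admissible formal blow-up $\cl S' \to \cl S = \op{Spf}(A)$ this lifts to a morphism of formal schemes $\phi' \colon \cl S' \to \widehat{\pic^{\nabla}(C_W)}_{\pic^{\nabla}(C)^{\psi=0}}$. Composing with the natural map to $\pic^{\nabla}(C_W)$ and invoking the representability of $\textbf{Pic}^{\nabla}$ (Remark \ref{AnalytPic} together with Section \ref{SectionUnivExtension}), $\phi'$ classifies a line bundle with integrable connection $(\cl L, \nabla)$ on $\cl S' \times C_W$. The factorization through the completion along $\pic^{\nabla}(C)^{\psi=0}$ means, after reduction to $\clo{\mathbb F_q}$, that the classifying map lands in $\pic^{\nabla}(C)^{\psi=0}$, i.e. the $p$-curvature of $(\cl L_{\clo{\mathbb F_q}}, \nabla_{\clo{\mathbb F_q}})$ vanishes, which is precisely the isocrystal condition; passing to rigid generic fibers then yields the associated pair on $S \times C_K^{\op{an}}$.

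For the inverse map, I would start from a pair $(\cl L, \nabla)$ that is an isocrystal on $\cl S' \times C_W \to \cl S'$ for some admissible blow-up $\cl S'$ of $\cl S$. Here the nilpotence of the $p$-curvature guarantees that the corresponding classifying morphism $\cl S' \to \pic^{\nabla}(C_W)$ factors through the completion $\widehat{\pic^{\nabla}(C_W)}_{\pic^{\nabla}(C)^{\psi=0}}$, as displayed in diagram (\ref{morAdmBlow}). Taking rigid generic fibers and using property (1) of Remark \ref{FormModProof}, which identifies $\cl S'_K$ with $\cl S_K = S$, produces a morphism $S \to ]\pic^{\nabla}(C)^{\psi=0}[$, as required.

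Finally I would verify that these constructions are well defined and inverse to one another. The key point, and the one that requires care, is independence from the choice of admissible blow-up: if $\cl S'$ and $\cl S''$ both lift $\phi_K$, then property (2) of Remark \ref{FormModProof} forces $\phi'$ and $\phi''$ to agree, since their rigid generic fibers both equal $S$. This is where I expect the main obstacle to lie, since everything hinges on the Raynaud dictionary being compatible both with the formal completion and with the moduli functor; the nilpotence condition, which makes the category of isocrystals on a formal lift depend only on the reduction, is exactly what ensures the isocrystal property is neither created nor destroyed in passing between the formal and rigid pictures. Granting this, the assertion that the two maps are mutually inverse follows immediately from the definitions: lifting a rigid morphism to a formal model and taking the rigid generic fiber of a morphism of formal schemes are inverse operations under the equivalence, so composing the two constructions in either order returns the original datum.
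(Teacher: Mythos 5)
Your argument is correct and is essentially the paper's own proof: both directions use the same ingredients --- lifting $\phi_K$ to a formal model $\cl S' \to \widehat{\pic^{\nabla}(C_W)}_{\pic^{\nabla}(C)^{\psi=0}}$ via property (3) of Remark \ref{FormModProof}, translating the factorization through the completion (diagram (\ref{morAdmBlow})) into the isocrystal condition on the reduction, inverting by passage to rigid generic fibers via property (1), settling independence of the blow-up via property (2), and concluding that the two operations are inverse by definition, exactly as the paper does. The only cosmetic deviation is your phrasing of the isocrystal condition as vanishing (rather than nilpotence) of the $p$-curvature, which is harmless here since for a line bundle the $p$-curvature is a section of a line bundle and nilpotent is equivalent to zero.
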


\begin{remark}
	Note also that this isomorphism is functorial: if $S\to T$ is a morphism of affinoid spaces, we have a map $]\pic^{\nabla}(C)^{\psi=0}[(S)\to ]\pic^{\nabla}(C)^{\psi=0}[(T)$. (\cite{BerPre} : the category of overconvergent isocrystals on $(X,S)$ is functorial on $(X,S)$).
\end{remark}
\subsection{Frobenius}

In order to define a Frobenius pull-back morphism on $]\pic^{\nabla}(C)^{\psi=0}[$, it is enough to define it on $]\pic^{\nabla}(C)^{\psi=0}[(S)$. Because of the previous discussion, given a $(\cl L,\nabla)=:L$ in (\ref{setHom}), and choosing a lift $\phi$ of the relative Frobenius of $S'\times C\to S'$ to $\cl S'\times C_W\to C_W$, we have a well-defined isocrystal  $\phi^*L$ on $\cl S'\times C_W$.
%\maybe add more details or reference to Shiho or Le Stum.\todo

\begin{example}[{\cite[Example 2.19]{Deligne2015}}]\label{ExampleDeligne}
	In light of our interpretation of the subset $]\pic^{\nabla}(C)^{\psi=0}[$, we can understand why Deligne used the cohomology of $\pic^0$ for the comparison on cohomology and why indeed this example gives an affirmative answer to his conjecture in rank 1. By his computation, we thus obtain the number of fixed points of the Frobenius action, which correspond to the isocrystals with Frobenius structure.
\end{example}

\normalem
\printbibliography
\end{document}